\def\MHD{\text{\sc mhd}}
\def\A{{A}}
\def\Av{{\bf A}}
\def\B{{\bf B}}
\def\X{{\bf X}}
\def\u{{\bf u}}
\def\E{{\bf E}}
\def\x{{\bf x}}
\def\n{{\bf n}}
\def\t{{\bf t}}
\def\r{{\bf r}}
\def\r{{r}}
\def\En{{\mathcal E}}
\def\half{\frac{1}{2}}
\def\Am{{\mathcal A}}
\def\Bm{{\mathcal B}}
\newtheorem{claim}[theorem]{Claim}
\def\be{\begin{equation}}
\def\ee{\end{equation}}
\def\ba{\begin{align}}
\def\ea{\end{align}}
\def\reals{\mathbb{R}}
\def\Id{{\mathbb I}}
\title{A high-order unstaggered constrained transport method for
the 3D ideal magnetohydrodynamic equations based on the method of lines}
\author{Christiane Helzel\thanks{Department of
    Mathematics, Ruhr-University Bochum, Universit\"atsstr.\ 150,
    44780 Bochum, Germany \, ({\tt Christiane.Helzel@ruhr-uni-bochum.de})} \and 
   James A.~Rossmanith\thanks{Department of Mathematics, Iowa State University,
 396  Carver Hall, Ames, IA 50011, USA \, ({\tt rossmani@iastate.edu})} \and
  Bertram Taetz\thanks{Department of
    Mathematics, Ruhr-University Bochum, Universit\"atsstr.\ 150,
    44780 Bochum, Germany \, ({\tt Bertram.Taetz@rub.de})}}
\begin{document}

\maketitle
\begin{abstract}
Numerical methods for solving the ideal magnetohydrodynamic (MHD) 
equations in more than one space dimension must confront the challenge
of controlling errors in the discrete divergence of the magnetic field.
One approach that has been shown successful in stabilizing MHD calculations
are constrained transport (CT) schemes.  CT schemes 
can be viewed as predictor-corrector methods for updating the magnetic field, where a magnetic field value is first predicted by a method that does not exactly preserve the divergence-free condition on the magnetic field, followed by a correction step that aims to control these divergence errors.
In  Helzel et al. [{\it J. Comp. Phys.} {\bf 227}, 9527 (2011)] the authors presented an unstaggered 
constrained transport
method for the MHD equations on 
three-dimensional Cartesian grids. 
In this approach an evolution equation for the magnetic potential is
solved during each time step and a divergence-free update of the
magnetic field is computed by taking the curl of the magnetic potential. 
The evolution equation for the vector
potential is only weakly hyperbolic, which requires special
numerical treatment. A key step in this method is the use of dimensional
splitting in order to overcome these difficulties.

In this work we generalize the method  of [{\it J. Comp. Phys.} {\bf 227}, 9527 (2011)] in three
important ways: (1) we remove the need for operator splitting by switching
to an appropriate method of lines discretization and coupling this with a non-conservative
finite volume method for the magnetic vector potential equation, (2) we increase the spatial and temporal order of
accuracy of the entire method to third order, and (3) we develop the method so that it is applicable
on both Cartesian and logically rectangular mapped grids. 
The method of lines approach that is used in this work is based on a third-order accurate finite
volume discretization in space coupled to a third order strong stability preserving Runge-Kutta time stepping method.
The evolution equation for the magnetic vector potential is solved using a non-conservative finite
volume method based on the approach of Castro et al.  [{\it Math. Comput.} {\bf 79}, 1427 (2010)].
 The curl of the magnetic potential is computed via a third-order accurate discrete operator
 that is derived from appropriate application of the divergence theorem and subsequent
 numerical quadrature on element faces. Special artificial resistivity
limiters are used to control unphysical oscillations in the magnetic potential and field components
across shocks. Several test computations are shown that confirm third order accuracy for
smooth test problems and high-resolution for test problems with shock waves.

\end{abstract}

\begin{keywords}
Magnetohydrodynamics, 
Constrained Transport, Hyperbolic Conservation Laws,  Finite Volume Methods,
Plasma Physics
\end{keywords}

\begin{AMS} 35L02, 65M08, 65M20, 76N15 \end{AMS}

 \section{Introduction}
It is well known that numerical
methods for the multi-dimensional magnetohydrodynamic (MHD) equations must in some way respect the
divergence-free condition on the magnetic field; failure to control divergence errors have been
shown to lead to nonlinear numerical instabilities  (e.g., see
T\'oth \cite{article:To00} for a discussion).  Brackbill and Barnes \cite{article:BrBa1980} were one of
the first to document this problem; since their paper a variety of 
approaches have been proposed to control the divergence of the magnetic
field. The four main approaches that have been developed are  8-wave \cite{article:Po94,article:Po99}, 
projection \cite{article:BaKi04,article:To00}, divergence-cleaning \cite{article:DKKMSW02},
and constrained transport methods \cite{article:Ba04,article:BaSp99a,article:DaWo98,article:EvHa88,article:FeTo03,article:HeRoTa10,article:LoZa00,article:LoZa04,article:Ro04b,article:Ryu98,article:De01b,article:To00,article:To2005}.
We will not endeavor to provide a detailed description of these four numerical approaches,
since such a description already exists in T\'oth \cite{article:To00} and Helzel et al. \cite{article:HeRoTa10}.
Instead, because in this work we will develop a constrained transport method, 
we very briefly describe the CT methodology for MHD in the remainder of this section.

The basic framework of the constrained transport methodology was developed
 by Evans and Hawley \cite{article:EvHa88}. The
premise was to modify the celebrated Yee scheme \cite{article:Ye66} from vacuum electromagnetism
to the ideal MHD equations; this framework included staggered electric and magnetic fields,
just as in the Yee scheme. 
The resulting method can be viewed as predictor-corrector approach for updating the magnetic field, where a magnetic field value is first predicted by a method that does not exactly preserve the divergence-free condition on the magnetic field, followed by a correction step that aims to control these divergence errors. 
The basic steps in the Evans and Hawley 
CT approach can be outlined as follows:
\begin{description}
\item[{\bf Step 0.}] Start with MHD cell average values at  the current time: $\left(\rho^n, \,
	\rho \u^n, \, {\mathcal E}^n, \, \B^n \right)$.
\item[{\bf Step 1.}] Take a time-step using some finite volume method,
	which produces the cell average values: $\left(\rho^{n+1}, \,
	\rho \u^{n+1}, \, {\mathcal E}^{n+1}, \, \B^{\star} \right)$.
	$\B^{\star}$ is the {\it predicted} value of the magnetic field.
\item [{\bf Step 2.}] Using the ideal Ohm's law relationship, $\E = \B \times \u$,
	and  some space and time 
	interpolation scheme for $\B$ and $\u$,  reconstruct a space and time staggered electric
	field value: $\E^{n+\half}$.
\item [{\bf Step 3}.] Produce the {\it corrected} magnetic field value, $\B^{n+1}$, 
	from the Yee scheme \cite{article:Ye66} step for the magnetic field:
	\[
		\B^{n+1} = \B^n - \Delta t \, \nabla \times \E^{n+\half},
	\]
	where the curl operation is understood to be a discrete curl.
\end{description}
An alternative, but completely equivalent, interpretation of this scheme
replaces {\bf Step 3} above with the following:
\begin{description}
\item [{\bf Step 3a}.] Produce the magnetic potential value, $\Av^{n+1}$, 
	from the induction equation written in potential form:
	\[
		\Av^{n+1} = \Av^n - \Delta t \, \E^{n+\half}.
	\]
\item [{\bf Step 3b}.] Produce the {\it corrected} magnetic field value, $\B^{n+1}$, 
	by computing the discrete curl of the magnetic potential:
	\[
		\B^{n+1} = \nabla \times \Av^{n+1}.
	\]
\end{description}
By altering how {\bf Step 2} (i.e., the reconstruction of the electric field) is done in the
Evans and Hawley  framework, a variety of CT schemes were developed,
including Balsara and Spicer \cite{article:BaSp99a}, Dai and Woodward \cite{article:DaWo98},
 Ryu et al. \cite{article:Ryu98}, and Londrillo and Del Zanna \cite{article:LoZa00,article:LoZa04}. 
 In his review article, T\'oth \cite{article:To00} showed that constrained transport
methods could also be formulated without the use of staggered electric and magnetic fields. 
Several  of these unstaggered variants have been formulated, including
the schemes of Fey and Torrilhon \cite{article:FeTo03}, Helzel et al. 
\cite{article:HeRoTa10}, and Rossmanith \cite{article:Ro04b}.

The particular focus of this work is to consider modifications of the 
constrained transport approaches of Rossmanith  \cite{article:Ro04b} and Helzel et al. \cite{article:HeRoTa10}. 
Rossmanith \cite{article:Ro04b} developed an unstaggered constrained transport method for the two-dimensional MHD
equations on Cartesian grids. In this approach LeVeque's wave propagation method
\cite{article:LeVeque97,lev02} is used to update the conserved variables of the MHD
equations. 
{\bf Step 3a} of the CT framework is modified by directly solving 
a scalar transport equation for the out-of-plane magnetic potential:
\begin{equation}
	\label{eqn:A3_2D}
	A^3_{,t} + u^1 A^3_{,x} + u^2 A^3_{,y} = 0.
\end{equation}
The magnetic field components are computed via the curl operation
on the magnetic potential:
\begin{equation}
	B^1 = A^3_{,y} \quad \text{and} \quad B^2 =- A^3_{,x}.
\end{equation}
Transport equation \eqref{eqn:A3_2D} for the potential
is  solved using a modified version of LeVeque's wave propagation algorithm. 
In particular, the flux limiters of the wave propagation
method are modified in the numerical solution of \eqref{eqn:A3_2D} in
order to avoid not only unphysical oscillations in  the magnetic
potential but also in derivatives of the potential (i.e., the magnetic field). 

Helzel et al. \cite{article:HeRoTa10} extended this unstaggered
constrained transport method to the three-dimensional MHD equations. In this case one
has to consider the following transport equation for the vector potential:
\begin{equation}
\begin{split}
\begin{bmatrix}
  \A^1 \\ \A^2 \\ \A^3
\end{bmatrix}_{,t} 
& +
\begin{bmatrix}
  0 & -u^2 & -u^3 \\
  0 & u^1 & 0 \\
  0 & 0 & u^1
\end{bmatrix}
\begin{bmatrix}
  \A^1 \\ \A^2 \\ \A^3
\end{bmatrix}_{,x}
 +
\begin{bmatrix}
   u^2 & 0 & 0 \\
  -u^1 & 0 & -u^3 \\
  0 & 0 & u^2
\end{bmatrix}
\begin{bmatrix}
  \A^1 \\ \A^2 \\ \A^3
\end{bmatrix}_{,y}
 \\ 
 & + \begin{bmatrix}
  u^3 &  0 &  0 \\
  0 & u^3 & 0 \\
  -u^1 &  -u^2 &  0
\end{bmatrix}
\begin{bmatrix}
  \A^1 \\ \A^2 \\ \A^3
\end{bmatrix}_{,z} = -
\begin{bmatrix}
	\psi_{,x} \\  \psi_{,y}  \\  \psi_{,z}
\end{bmatrix}.
\end{split}
\end{equation}
The magnetic field components are computed via the curl operation
on the magnetic potential:
\begin{equation}
	B^1 = A^3_{,y} - A^2_{,z}, \quad B^2 = A^1_{,z} - A^3_{,x}, 
		\quad \text{and} \quad B^3 = A^2_{,x} - A^1_{,y}.
\end{equation}
 In contrast to the 2D case, the 3D transport equation for the magnetic potential needs an additional
 constraint (i.e., the gauge condition) in order to get a closed set of evolution equations for the
 vector potential ${\bf A}$ and the scalar potential $\psi$.
Helzel et al. \cite{article:HeRoTa10} chose the so-called Weyl gauge ($\psi \equiv 0$), 
which results in a transport equation for the vector potential that is only weakly
hyperbolic. This means that standard methods for hyperbolic problems 
that are based on an eigenvector decomposition of the jump of the advected quantities    
at grid cell interfaces cannot be used. In order to overcome this difficulty,
Helzel et al. \cite{article:HeRoTa10} developed a second order accurate dimensional split method for the
magnetic vector potential equation that
is well defined also in the weakly hyperbolic case. 
Many test computations confirmed that this approach leads to a
robust and accurate approximation of the MHD equations on Cartesian
grids.  For smooth solutions of the MHD equations, the method from
\cite{article:HeRoTa10} was shown to be second order accurate.
 
The focus of this work is to generalize the approach \cite{article:HeRoTa10} in three
important ways: 
\begin{enumerate}
\item We remove the need for operator splitting by switching
to an appropriate method of lines discretization and coupling this with a non-conservative
finite volume method for the magnetic vector potential equation.
\item We increase the spatial and temporal order of
accuracy of the entire method to third order using a third-order SSP-RK time-stepping
method and a third-order finite volume spatial discretization.
\item We develop the method so that it is applicable
on both Cartesian and logically rectangular mapped grids. 
\end{enumerate}
The nonconservative method that we use on the magnetic vector potential
equation is inspired by previous work on
the approximation of non-conservative hyperbolic systems by
Canestrelli et al.\ \cite{dumb09_1D}, Dumbser et al.\
\cite{dumb10_2D}, and Ketcheson et al. \cite{sharpclaw}. 
By using a method of lines approach with a stong stability preserving
Runge-Kutta (SSP-RK) method in time and third order accurate
reconstruction in space, we are able to construct a third order
accurate unstaggered constrained transport method. In this approach,
the correction of the magnetic field is performed during each stage of
the Runge-Kutta method. For the spatial discretization we use a multidimensional 
(limited) polynomial reconstruction of the cell average quantities.
%

After a brief review of the ideal MHD equations in Section \ref{sec:mhd},
we introduce the proposed constrained transport scheme in a series of four sections.
In Section \ref{section:temp} we outline the time discretization of the proposed method
and show how to embed the constrained transport steps into the various
stages of the third-order SSP Runge-Kutta scheme.
In Section \ref{section:mhd_space} we describe the third order
accurate spatial discretization for the MHD system. 
In Section \ref{section:potential_space} we show how to modify this
approach to obtain a high-order non-conservative spatial discretization
of the magnetic vector potential equation. In this section we also
develop special artificial resistivity limiters that are used to control
unphysical oscillations in both the magnetic potential and magnetic field components.
In Section \ref{sec:curlA}
we show how to compute the curl to high-order of the
discrete magnetic vector potential.
Several numerical examples on Cartesian and mapped grids are
shown in Section \ref{section:test-computations}; these examples are used to both
verify the third-order accuracy in space and time, as well as the
shock-capturing ability of the proposed scheme. Conclusions can be
found in Section \ref{sec:conclusions}.

\section{The ideal MHD equations} 
\label{sec:mhd}
The ideal MHD equations are a first order hyperbolic system of
conservation laws that can be written in the form
\begin{gather}
\label{MHD_eqn}
 \frac{\partial}{\partial t}
  \begin{bmatrix}
    \rho \\ \rho \u \\ \En \\ \B
  \end{bmatrix} +  \nabla \cdot
  \begin{bmatrix} \rho \u \\ \rho \u  \u + \left( {p} + \frac{1}{2}
  \| \B \|^2  \right) {\mathbb I}
    - \B  \B \\
    \u \left(\En + {p} + \frac{1}{2} \| \B \|^2 \right) - 
    	\B \left(\u \cdot \B \right)
    \\ \u  \B - \B  \u
  \end{bmatrix} = 0, \\
  \label{eqn:divfree}
  \nabla \cdot \B = 0,
\end{gather}
where $\rho$, $\rho \u$ and $\En$ are the total mass, momentum and
energy densities, and $\B$ is the magnetic field. The termal pressure,
$p$, is related to the conserved quantities through the ideal gas law
\be
\label{eqn:eos}
p = (\gamma -1) \left( \En - \frac{1}{2} \| \B \|^2 - \frac{1}{2} \rho
  \| \u \|^2 \right),
\ee 
where $\gamma = 5/3$ is the ideal gas constant. Here $\| \cdot \|$
denotes the Euclidean vector norm.
A complete derivation and discussion
of MHD system \eqref{MHD_eqn}--\eqref{eqn:divfree} can
be found in several standard plasma physics textbooks
(e.g., \cite{book:Chen84,book:Go98,book:Pa91}).

\subsection{Hyperbolicity of the MHD system}
We first note that system \eqref{MHD_eqn}, along with the
equation of state \eqref{eqn:eos}, provides
a full set of equations for the time evolution 
of all eight state variables: $\left( \rho, \, \rho \u, \, \En, \, \B \right)$.
These evolution equations form a hyperbolic system. In particular, the
eigenvalues of the flux Jacobian in some arbitrary
direction ${\bf n}$ ($\| {\bf n} \| = 1$) can be written as follows:
  \begin{alignat}{2}
  \lambda^{1,8} &= {\bf u} \cdot {\bf n} \mp c_f & &\text{ :  fast magnetosonic waves,} \\
  \lambda^{2,7} &= {\bf u} \cdot {\bf n}  \mp c_a & &\text{ :  Alfv$\acute{\text{e}}$n waves,} \\
  \lambda^{3,6} &= {\bf u} \cdot {\bf n}  \mp c_s & &\text{ :  slow magnetosonic waves,} \\
  \lambda^{4} &= {\bf u} \cdot {\bf n} & &\text{ :  entropy wave,} \\
  \lambda^{5} &= {\bf u} \cdot {\bf n}  & &\text{ :  divergence wave,}
\end{alignat}
where 
\begin{align}
  a &\equiv \sqrt{\frac{\gamma p}{\rho}}, \\
  c_a &\equiv \sqrt{\frac{\left(\B \cdot \n \right)^2}{\rho}}, \\
  c_f &\equiv \left\{ \frac{1}{2} \left[ a^2 +
      \frac{\|\B\|^{2}}{\rho} + \sqrt{\left(a^2 +
      \frac{\|\B\|^{2}}{\rho} \right)^{2} - 4 a^2
      \frac{\left(\B \cdot \n \right)^2}{\rho}} \right] \right\}^{1/2}, \\
  c_s &\equiv \left\{ \frac{1}{2} \left[ a^2 +
      \frac{\|\B\|^{2}}{\rho} - \sqrt{\left(a^2 +
      \frac{\|\B\|^{2}}{\rho} \right)^{2} - 4 a^2
      \frac{\left(\B \cdot \n \right)^2}{\rho}} \right] \right\}^{1/2} \, .
\end{align}
The eigenvalues are well-ordered in the sense that
\begin{equation}
  \lambda^{1} \le \lambda^{2} \le \lambda^{3} \le \lambda^{4} \le \lambda^{5} \le \lambda^{6}
  \le \lambda^{7} \le \lambda^{8} \, .
\end{equation}
The fast and slow magnetosonic waves are genuinely nonlinear, while the remaining waves are linearly degenerate.
Note that the so-called {\it divergence-wave} has been
made to travel at the speed $\u \cdot {\bf n}$ 
via the Godunov-Powell formulation  \cite{article:Go72,article:Po94,article:Po99},
thus restoring Galilean invariance.

\subsection{The role of $\nabla \cdot \B=0$ in numerical discretizations}
If the initial magnetic field is divergence-free, then
under the evolution described by equation \eqref{MHD_eqn}, the
divergence-free condition is satisfied for all time.  This makes
\eqref{eqn:divfree} an {\it involution} \cite{book:Dafermos10}. 
Unfortunately, although \eqref{eqn:divfree}
is an involution under the evolution of the exact MHD system, it is generally
not an involution of the discretized MHD equations.
Furthermore, it has been well-documented in the literature that the
failure of numerical methods to control divergence errors can lead to
nonlinear numerical instabilities \cite{article:To00}.

Explanations for why the divergence errors lead to 
numerical instabilities have been considered from a few
different points-of-view. We briefly review the two main
explanations below.
\begin{enumerate}
\item Brackbill and Barnes \cite{article:BrBa80} showed
that if $\nabla \cdot \B \ne 0$, then the magnetic force,
\be
	{\bf F} = \nabla \cdot \left\{ \B \B - \frac{1}{2} \| \B \|^2 \, {\mathbb I}
		\right\},
\ee
 in the direction of the
   magnetic field, will not in general vanish:
   \be  {\bf F} \cdot \B 
   	= \| \B \|^2 \, \left( \nabla \cdot \B\right) \ne 0. 
\ee
  If this spurious forcing becomes too large, it can lead to numerical instabilities \cite{article:BrBa80,article:Ro04b,article:To00}.
  
\item Barth \cite{article:Barth05} gave an explanation based on the well-known result of Godunov \cite{article:Go72}
that the MHD entropy density,
\be
U(q) = -\rho \log \left( p \rho^{-\gamma} \right),
\ee
produces a set of {\it entropy variables}, $v=U_{,q}$, that do not immediately
symmetrize the ideal MHD equations. Instead, a symmetric hyperbolic
form of ideal MHD can only be obtained if an additional term
that is proportional to the divergence of the magnetic field
is included in the MHD equations:
\be
\underset{\text{ideal MHD}}{\underbrace{q_{,t} + {\nabla} \cdot {\bf F}(q)}} +  
		\underset{\text{additional term}}{\underbrace{\chi_{,v} \,
			{\nabla} \cdot \B}} = 0, \qquad \text{where} \quad
	\chi(q) = \left( \gamma -1 \right) \frac{\rho \u \cdot \B}{p}.
\ee
By looking at how the entropy behaves on the discrete
level, Barth \cite{article:Barth05} was able to prove that certain
discontinuous Galerkin discretizations of the ideal MHD
equtions could be made to be {\it entropy stable} (see
Tadmor \cite{article:Tad03})
 if the discrete magnetic field was made globally divergence-free.
The implication of this result is that schemes that do not
control errors in the divergence of the magnetic field
run the risk of becoming entropy unstable.
\end{enumerate}

\subsection{The evolution of the magnetic potential}
Since the magnetic field is divergence-free, it can be written as the
curl of a magnetic vector potential:
\be
	\B = \nabla \times \Av.
\ee
The essential feature of constrained transport methods is to
update the magnetic vector potential in some manner, and then
by computing from it some discrete form of the curl,  to obtain a
discrete divergence-free magnetic field.
Using the relation
$$\nabla \cdot \left( \u \B - \B \u \right) = \nabla \times \left( \B
  \times \u \right),$$
the last row of (\ref{MHD_eqn}) can be written in the form
\be
\label{eqn:induction}
\B_{,t} + \nabla \times \left( \B \times \u \right) = 0.
\ee
Since $\B$ is divergence free, we set $\B = \nabla \times \Av$ and rewrite \eqref{eqn:induction} as
\begin{gather}
\nabla \times \left\{  \Av_{,t}  + \left( \nabla \times \Av \right) \times \u \right\} = 0, \\
\label{eqn:vecpot_general}
\Longrightarrow \quad \Av_{,t} +  \left( \nabla \times \Av \right) \times \u = -\nabla \psi,
\end{gather}
where $\psi$ is an arbitrary scalar function. Different choices of $\psi$
represent different {\it gauge condition} choices, see
\cite{article:HeRoTa10}.

Our approach is based on the Weyl gauge, which means we take $\psi \equiv
0$. This results in an evolution equation for the vector potential is given by
\begin{equation}\label{eqn:potential}
\Av_{,t} +  \left( \nabla \times \Av \right) \times \u = 0,
\end{equation}
which can also be rewritten in quasilinear form as
\begin{equation}
\label{eqn:system_compact}
\Av_{,t} + N_1(\u) \, \Av_{,x} + N_2(\u) \, \Av_{,y} + N_3(\u) \, \Av_{,z} = 0,
\end{equation}
with 
\begin{equation}\label{eqn:N1N2N3}
N_1 = \begin{bmatrix}
0 & -u^2 & -u^3\\
0 & u^1 & 0\\
0 & 0 & u^1\end{bmatrix},  \quad
N_2 = \begin{bmatrix}
u^2 & 0 & 0\\
-u^1 & 0  & -u^3\\
0 & 0 & u^2\end{bmatrix}, \quad
N_3 = \begin{bmatrix}
u^3 & 0  & 0\\
0 & u^3 & 0\\
-u^1 & -u^2 & 0 \end{bmatrix}.
\end{equation}
The the flux Jacobian in some
direction $\n = (n^1,n^2,n^3)^T$,
\begin{equation}
\label{eqn:fluxJ}
M(\n,\u) := n^1 N_1 + n^2 N_2 + n^3 N_3=
\begin{bmatrix}
n^2 u^2 + n^3 u^3 & - n^1 u^2 & - n^1 u^3 \\
-n^2 u^1 & n^1 u^1 + n^3 u^3 & - n^2 u^3 \\
-n^3 u^1 & -n^3 u^2 & n^1 u^1 + n^2 u^2
\end{bmatrix},
\end{equation}
has real eigenvalues for all
$\|\n\|=1$, but for any nonzero velocity vector, $\u$, there exist directions $\n$, for which $M(\n,\u)$
does not have a complete set of right eigenvectors. Therefore, system \eqref{eqn:system_compact}--\eqref{eqn:N1N2N3}
 is only {\em weakly hyperbolic} \cite{article:HeRoTa10}.
In order to see this weak hyperbolicity, we write the eigenvalues of matrix \eqref{eqn:fluxJ}: 
\be
\lambda = \bigl\{0, \n \cdot \u, \n \cdot \u\bigr\},
\ee
and the matrix of right eigenvectors:
\begin{equation}
R = \Biggl[ \r^{(1)} \, \Biggl| \, \r^{(2)} \, \Biggl| \, \r^{(3)} \Biggr] = 
\begin{bmatrix}
n^1 & \quad  n^2 u^3 - n^3 u^2 & \quad u^1 \left( \u \cdot \n \right) - n^1 \| \u \|^2 \\
n^2 & \quad n^3 u^1 - n^1 u^3 & \quad u^2 \left( \u \cdot \n \right) - n^2 \| \u \|^2 \\
n^3 & \quad n^1 u^2 - n^2 u^1 & \quad u^3 \left( \u \cdot \n \right) - n^3 \| \u \|^2
\end{bmatrix}.
\end{equation}
Assuming that $\| \u \| \ne 0$ and $\| \n \| = 1$, the determinant of
matrix $R$ can be written as
\begin{equation}
\text{det}(R) = -  \| \u \|^3 \, \cos(\alpha) \, \sin^2(\alpha),
\end{equation}
where $\alpha$ is the angle between the vectors $\n$ and $\u$.
The difficulty is that for any non-zero velocity vector, ${\bf u}$, 
one can always find four directions, $\alpha=0$, $\pi/2$, $\pi$, and $3\pi/2$,
such that $\text{det}(R) = 0$.
In other words, for every $\| {\bf u} \| \ne 0$ there exists four degenerate directions in which the eigenvectors are incomplete.

 \section{Outline of the constrained transport
   algorithm and temporal discretization}
\label{section:temp}
In our constrained transport method for the MHD equations, we separate
the discretization in space and time. For the temporal discretization
we use an SSP-RK method of order up to three, see e.g.\ \cite{shu_01,book:ssp2011}.

Consider a system of ordinary differential equations of the form 
\begin{equation}\label{ode1}
Q'(t) = {\cal L}(Q(t)).
\end{equation}
One time step of the third order accurate SSP-RK method can be written
in the form
\begin{equation}
\begin{split}
Q^{(1)} & = Q^n + \Delta t {\cal L}(Q^n), \\
Q^{(2)}  & = \frac{3}{4} Q^n + \frac{1}{4} Q^{(1)} + \frac{1}{4}
\Delta t {\cal L}(Q^{(1)}), \\
Q^{n+1} & = \frac{1}{3} Q^n + \frac{2}{3} Q^{(2)} + \frac{2}{3} \Delta
t {\cal L}(Q^{(2)}). 
\end{split}
\end{equation}

In a method of lines approach we obtain systems of ordinary
differential equations of the form (\ref{ode1}) after discretizing the
spatial derivatives. Here the components of the vector $Q(t)$
represent cell average values of the physical quantities in the
different grid cells.

The semi-discrete form of  (\ref{MHD_eqn}) now has the form
\begin{equation}\label{MHD_semi}
Q'_{\MHD}(t) = {\cal L}_1(Q_{\MHD}(t)),
\end{equation}
where $Q_{\MHD}(t)$ represents the grid function at time $t$ consisting
of all cell averaged values of the conserved quantities form the
MHD equations: $(\rho,\rho \u, \En, \B)$. The precise form of the spatial discretization
represented by ${\cal L}_1(Q_{\MHD}(t)$ will be discussed in Section
\ref{section:mhd_space}. The semi-discrete form of the evolution equation for
the potential (\ref{eqn:potential}) has the general form
\begin{equation}\label{potential_semi}
Q'_{\Av} (t) = {\cal L}_2 ( Q_{\Av}(t),
Q_{\MHD}(t) ),
\end{equation}
where $Q_{\Av}(t)$ is the grid function at time $t$ consisting of the cell
averaged values of the magnetic vector potential, $\Av$.
Note that the evolution of the potential depends on the velocity field, which we take
to be as given from the solution step of the MHD equation. This is reflected in the notation
used in (\ref{potential_semi}). The spatial discretization of the
potential equation will be discussed in Section \ref{section:potential_space}. 

A single stage the constrained transport algorithm has the form:
\begin{enumerate}
\item[0.] Start with $Q_{\MHD}^n$ and $Q_{\Av}^n$ (the solution from the previous time step) and
	$Q_{\MHD}^{(k-1)}$ and $Q_{\Av}^{(k-1)}$ (the solution from the previous time stage).

\item[1.] Update without regard to the divergence-free constraint on the magnetic field:
\begin{equation*}
\begin{split}
Q_{\MHD}^{(k\star)} & = \alpha^{(k)} Q_{\MHD}^{n} + \left(1-\alpha^{(k)} \right)  Q_{\MHD}^{(k-1)} + \beta^{(k)}  \Delta t \, {\cal L}_1 \left(Q_{\MHD}^{(k-1)} \right),\\
Q_{\Av}^{(k)} & =  \alpha^{(k)} Q_{\Av}^{n} + \left(1-\alpha^{(k)} \right)  Q_{\Av}^{(k-1)} +  \beta^{(k)}  \Delta t \, {\cal L}_2 \left(Q_{\Av}^{(k-1)},
Q_{\MHD}^{(k-1)} \right),
\end{split}
\end{equation*}
where $Q_{\MHD}^{(k\star)} = \left( \rho^{(k)}, \rho \u^{(k)}, \, \En^{(k)}, \,
\B^{(k\star)} \right)$ and $\B^{(k\star)}$ denotes the {\it predicted} value of the
magnetic field in the first Runge-Kutta stage.

\item [2.] The magnetic field components of $Q_{\MHD}^{(k\star)}$  are then {\it corrected} by 
$\nabla \times Q_{\Av}^{(k)}$:
\begin{equation*}
	\B^{(k)} = \nabla \times Q_{\Av}^{(k)} \quad \Longrightarrow \quad
	Q_{\MHD}^{(k)}
=  \left( \rho^{(k)}, \rho \u^{(k)}, \, \En^{(k)}, \,
\B^{(k)} \right).
\end{equation*}
\end{enumerate}
%
For smooth solutions, this procedure with third-order accurate SSP-RK gives an update of the grid function $Q_{\MHD}^{n+1}$ that is
$3^{\text{rd}}$ order accurate in time.
This fact is confirmed numerically via  test computations done in Section \ref{section:test-computations}.

Note that in each stage we take one Euler step on the magnetic vector potential
equation \eqref{eqn:potential}. In each of these steps we evaluate the
spatial discretization operator, ${\cal L}_2$, at the current values of 
the potential, $Q_{\Av}$, and the current velocity values that can be obtained
from $Q_{\MHD}$. Just as in the constrained transport approaches of  \cite{article:HeRoTa10,article:Ro04b},
during the Euler step on \eqref{eqn:potential} we view the velocity $\u$ as a given function,
and view \eqref{eqn:potential} as a closed equation for the magnetic potential $\Av$.

\section{Spatial discretization of the MHD equations}
\label{section:mhd_space}
Our spatial discretization of the MHD equations is similar to the one
used in {\sc sharpclaw} by Ketcheson et al.\ \cite{sharpclaw}. However,
in our method we use a multidimensional polynomial
reconstruction. 
This gives the full order of convergence (here up to third order) 
for smooth nonlinear problems on Cartesian grids and reduces grid
effects for non-smoothly varying mapped grids. 
We describe the one-dimensional, multidimensional Cartesian, and 
multidimensional mapped grid spatial discretizations in
the subsequent three subsections.

\subsection{One-dimensional spatial discretization}
We first briefly discuss the one-dimen\-sio\-nal case, where we consider a
hyperbolic conservation law of the form 
\begin{equation}\label{hyp_con_law}
q_{,t}  + f(q)_{,x} = 0,
\end{equation}
where $q:\mathbb{R}^+ \times \mathbb{R} \rightarrow \mathbb{R}^m$ is a
vector of conserved quantities and $f: \mathbb{R}^m \rightarrow
\mathbb{R}^m$ is the flux function. For smooth solutions
we can transform (\ref{hyp_con_law}) into the equivalent quasilinear form
\begin{equation}
q_{,t} + A(q) \, q_{,x} = 0,
\end{equation}
where $A(q) := f_{,q}(q)$ is the flux Jacobian matrix ($A:\reals^m \rightarrow \reals^{m \times m}$). 
We assume that some initial data
$q(0,x) = q_0(x)$ with $q_0 : \mathbb{R} \rightarrow \mathbb{R}^m$ 
and some appropriate boundary conditions are given.

The cell average of the conserved quantity in grid cell $i$ at time
$t$ is denoted by $Q_i(t)$, i.e.\
\be
Q_i(t) \approx \frac{1}{\Delta x}
\int_{x_{i-\frac{1}{2}}}^{x_{i+\frac{1}{2}}} q(t,x) \, dx.
\ee
From the given  cell average values we compute piecewise polynomial
approximations (with WENO limiting)  
of the conserved quantities, using polynomials of degree at most $p$.
For smooth solutions these polynomial
agree up to order $p$ with the exact solution:
\begin{equation}\label{eqn:tildeq}
\begin{split}
\tilde{q}(t^n,x) & := \tilde{q}_i (t^n,x) \quad \text{for} \quad x \in
\left( x_{i-\frac{1}{2}},x_{i+\frac{1}{2}} \right),\\
\tilde{q}_i(t^n,x) & = q(t^n,x) + {\cal O}\left(\Delta x^{p+1} \right).
\end{split}
\end{equation} 
The reconstructed values from grid cell $i$ at the left and right grid
cell interface are denoted by
\begin{equation}\label{qpm}
q_{i-\frac{1}{2}}^+ := \lim_{\varepsilon\rightarrow 0} \, \tilde{q}_i \left(x_{i-\frac{1}{2}} + \varepsilon \right) \quad
\text{and} \quad
q_{i+\frac{1}{2}}^- := \lim_{\varepsilon\rightarrow 0} \, \tilde{q}_i \left(x_{i+\frac{1}{2}} - \varepsilon \right).
\end{equation}

In semi-discrete form, the method that is used to evolve the cell
averages of the conserved quantities is given by (see \cite{sharpclaw})
\begin{equation}\label{method1}
Q'_i(t) = -\frac{1}{\Delta x} \left( {\cal
    A}^- \Delta q_{i+\frac{1}{2}} + {\cal A}^+ \Delta
  q_{i-\frac{1}{2}} + {\cal A}\Delta q_i \right).
\end{equation}
The fluctuations ${\cal A}^\pm \Delta q_{i+\frac{1}{2}}$ can be
computed as in the standard wave propagation algorithm of LeVeque
\cite{article:LeVeque97}, or by using the $f$-wave approach of Bale et
al.\ \cite{article:BLMR2002},
with the only difference being that the left and
right values of the conserved quantities used in the wave 
decomposition or $f$-wave decomposition are obtained from the 
reconstructed interface values instead of the cell average values. 
The additional term 
$$
{\cal A} \Delta q_i =
\int_{x_{i-\frac{1}{2}}}^{x_{i+\frac{1}{2}}} A(\tilde q_i) \,
\tilde{q}_{i,x} \, dx \approx \int_{x_{i-\frac{1}{2}}}^{x_{i+\frac{1}{2}}} A(q)
\, {q}_{,x} \, dx,
$$
can be computed via Gaussian quadrature of the appropriate order. For the
approximation of a hyperbolic problem in the flux difference 
form (\ref{hyp_con_law}), the integral is equal to the flux
difference:
$$
\int_{x_{i-\frac{1}{2}}}^{x_{i+\frac{1}{2}}} A(\tilde{q}_i)
\, \tilde{q}_{i,x} \, dx = f\left(q_{i+\frac{1}{2}}^- \right) - f\left(q_{i-\frac{1}{2}}^+ \right).
$$
Furthermore, the fluctuations satisfy
\begin{equation*}
\begin{split}
{\cal A}^+\Delta q_{i-\frac{1}{2}} & = f\left(q_{i-\frac{1}{2}}^+\right) -
f_{i-\frac{1}{2}}^*\left(q_{i-\frac{1}{2}}^-,q_{i-\frac{1}{2}}^+\right), \\
{\cal A}^- \Delta q_{i+\frac{1}{2}} & = f_{i+\frac{1}{2}}^*\left(q_{i+\frac{1}{2}}^-,q_{i+\frac{1}{2}}^+\right) -
f\left(q_{i+\frac{1}{2}}^-\right).
\end{split}
\end{equation*}
Here $f^*$ denotes the interface flux, which is computed by solving a
Riemann problem for the reconstructed left and right interface values.
Thus for hyperbolic problems in divergence form,
 the method can be formulated as a  finite volume method of the form
\begin{equation}\label{method-fv}
 Q'_i(t)  = \frac{1}{\Delta x} \left(
  f_{i+\frac{1}{2}}^*\left(q_{i+\frac{1}{2}}^-,q_{i+\frac{1}{2}}^+\right) - f_{i-\frac{1}{2}}^*\left(q_{i-\frac{1}{2}}^-,q_{i-\frac{1}{2}}^+\right) \right).
\end{equation}
The Riemann problem can be solved using a variety of
solvers; as a matter of practice, in this work we use the Roe-type
Riemann solvers of the form described in Bale et al. \cite{article:BLMR2002}.

\subsection{Multidimensional Cartesian grids}
\label{multiD_hyp}
For simplicity, we consider a two-dimensional hyperbolic equation of the
form
\begin{equation}
q_{,t} + f(q)_{,x} + g(q)_{,y} = 0,
\end{equation}
or of the quasilinear form
\begin{equation}
q_{,t} + A(q) \, q_{,x} + B(q) \, q_{,y} = 0.
\end{equation}
Here  $q: \mathbb{R}^+ \times \mathbb{R}^2  \rightarrow \mathbb{R}^m$
is a vector of conserved quantities,
$f, g: \mathbb{R}^m \rightarrow \mathbb{R}^m$ are numerical flux
functions, and $A(q)=f'(q)$, $B(q)=g'(q)$ are the flux Jacobian
matrices. 

In order to get a high-order accurate approximation in space with the
method of lines approach, we need to reconstruct values on
cell interfaces at nodes of a quadrature formula that is used 
to approximate interface fluxes (or fluctuations).
In order to do this, we compute a piecewise polynomial reconstruction
$\tilde{q}$ of the conserved quantities in each grid cell.
The grid cell average of this reconstructed polynomial agrees with the
original cell average (this is needed to guarantee numerical
conservation). Furthermore, the reconstruction  is
based  on a (limited) least
squares approach that includes all neighbors that share an edge or a
corner with the considered grid cell, see \cite{article:BF1990,ti_10,ts_11}.   

The numerical method can be written as a finite volume scheme in the 
semi-discrete form 
\begin{equation}
Q'_{ij}(t) = - \frac{1}{\Delta x} \left(
  f_{i+\frac{1}{2} \, j}^* - f_{i-\frac{1}{2} \, j}^*\right) -
\frac{1}{\Delta y} \left( g_{i \, j+\frac{1}{2}}^* -
  g_{i \, j-\frac{1}{2}}^* \right),
\end{equation}
or in the form used by the wave propagation algorithm (compare with \cite{sharpclaw}) 
\begin{equation}
\begin{split}
Q'_{ij}(t) =  &\frac{1}{\Delta x} \left(
  {\cal A}^+ \Delta q_{i-\frac{1}{2} \, j} + {\cal A}^- \Delta
  q_{i+\frac{1}{2} \, j} + {\cal A} \Delta q_{ij} \right)\\
+ &\frac{1}{\Delta y} \left( {\cal B}^+ \Delta q_{i \, j-\frac{1}{2}} +
  {\cal B}^- \Delta q_{i \, j+\frac{1}{2}} + {\cal B} \Delta q_{ij}
\right).
\end{split}
\end{equation}
Now $Q_{ij}(t)$ is an approximation of the cell average of the
conserved quantity at time $t$  in grid cell $(i,j)$. 
Using the piecewise polynomial reconstruction of the conserved
quantity $q$ in each grid cell, we can evaluate left and
right values of the conserved quantity at Gaussian quadrature points 
along  grid cell interfaces.
These are used to obtain a high order accurate representation of the
flux:
\begin{align}
\frac{1}{\Delta y} \int_{y_{j-\frac{1}{2}}}^{y_{j+\frac{1}{2}}}
f^*\left(q\left(x_{i\pm\frac{1}{2}},y \right) \right) \, dy  \approx
\frac{1}{\Delta y} \sum_{k=1}^q c_k \,
f^*\left(q_{i\pm\frac{1}{2} \, j_k}^-,q_{i\pm\frac{1}{2} \, j_k}^+\right) 
 &=: f_{i\pm \frac{1}{2} \, j}^*, \\
\frac{1}{\Delta x} \int_{x_{i-\frac{1}{2}}}^{x_{i+\frac{1}{2}}}
g^*\left(q\left(x,y_{j\pm\frac{1}{2}}\right)\right) \, dx  \approx
\frac{1}{\Delta x} \sum_{k=1}^q c_k \,
g^*\left(q_{i_k \, j\pm \frac{1}{2}}^-,q_{i_k \, j\pm \frac{1}{2}}^+\right)
 &=: g_{i \, j\pm \frac{1}{2}}^*.
\end{align}
The coefficients $c_k$ are the quadrature weights 
and  $x_{i_k}$ and $y_{j_k}$ are the quadrature points.
In an analogous way we can define the fluctuations ${\cal A}^\pm
\Delta q$ and ${\cal B}^\pm \Delta q$ along a grid cell interface.

In order to compute the flux values $f^*$ and $g^*$ at the
Gaussian quadrature points on the cell interface, we use
a Roe-type Riemann solver of the form described in Bale et al. \cite{article:BLMR2002}.
An advantage of this Riemann solver over the classical Roe solver is that
Roe averages can be replaced with simpler averages; this feature is
very helpful in the case of the MHD equations.
In any Roe-type solver, we are required to compute right and left eigenvectors
of the flux Jacobian; for the MHD system one has to be careful
about the eigenvector scalings in order to avoid singularities in the
eigenvector basis \cite{article:RoBa96}. 
For the MHD system we use the eigenvectors proposed by Barth
\cite[pages 212--214]{Barth1998}, which are based on the entropy
variables and have near optimal scaling properties for physically
admissible solution values.

We have shown here only the 2D spatial discretization. We omit the
details of the 3D spatial discretization, since the extension 
to 3D is straightforward. In 3D we use a three-dimensional limited piecewise polynomial
reconstruction of the conserved quantities and the
fluxes at grid cell interfaces are computed
by integration over a rectangular area (i.e., a surface integral instead of a line
integral as in 2D).

\subsection{Mapped grids}\label{section:3dmhd-mapped}
The spatial discretization can also be extended to two-dimensional logically
rectangular mapped grids and three-dimensional hexahedral grids. 
The vertices of each mapped grid cell in physical space, $(x,y,z)$,
are obtained by mapping the vertices
from a Cartesian mesh in computational space , $(\xi, \eta, \zeta)$ (see \cite{article:CHL08}).

In three space dimensions, each grid cell can be represented by a
trilinear map (also called a ruled cell) \cite{article:CL03,article:Us01} of the form
$$
\X(\xi,\eta,\zeta) = c_{000} + c_{100} \xi + c_{010}\eta + c_{001}
\zeta + c_{110}\xi\eta + c_{101}\xi \zeta + c_{011} \eta\zeta + c_{111}\xi\eta\zeta,
$$ 
where $\X = (x,y,z)$, $0 \le \xi, \eta, \zeta \le 1$, and $c_{000}, \ldots,
c_{111} \in \mathbb{R}^3$ are vector coefficients. 
The coefficients are determined from the
vertices of the hexahedral cell.
This representation of the grid cell can be used to
compute the volume of each grid cell, as well as face areas of all the
faces, and the unit normal vectors at each face. 
The two-dimensional case is obtained by setting
$\zeta = 0$.

Consider a multidimensional hyperbolic
system of the general form
$$
q_{,t} + \nabla \cdot {\bf F} = 0,
$$
where the columns of the matrix ${\bf F}= [\, f | g | h\, ]$ are the flux functions in
the $x$, $y$ and $z$ direction.
We consider the semi-discrete form of the finite volume method for a
three-dimensional grid cell that we denote $C_{ijk}$. The volume of
the grid cell in physical space is denoted by $|C_{ijk}|$. This
volume can be computed using the Jacobian determinant of the trilinear
map 
\begin{equation}
|C_{ijk}| = \int_0^1 \int_0^1 \int_0^1 \Big{|}\frac{\partial
  \X}{\partial(\xi,\eta,\zeta)}\Big{|} \, d\xi\, d\eta \, d\zeta.
\end{equation}
Formulas to evaluate this expression are given in \cite{article:Us01}.

The change of the cell average of the conserved quantity in grid cell
$(i,j,k)$ is described by
\begin{equation}\label{eqn:hexcell}
\begin{split}
Q_{ijk}'(t) & = - \frac{1}{|C_{ijk}|} \iiint_{C_{ijk}}
\nabla \cdot {\bf F} \, dV
 = - \frac{1}{|C_{ijk}|} \varoiint_{\partial C_{ijk}} {\bf F} \cdot \nu \, dA,
\end{split}
\end{equation}
where $\nu$ is the outward pointing normal vector along the outer
boundary of the grid cell. 

At each face of a three-dimensional grid cell, one of the variables
$\xi, \eta, \zeta$ is either zero or one and the face is
represented by a ruled surface.
Consider for example the grid cell face that corresponds to $\zeta
= 0$ in computational space (this is the interface with the index
$(i,j,k-\frac{1}{2})$). In physical space, this interface is described by the bilinear map
$$
\X(\xi,\eta) = c_{00} + c_{10} \xi + c_{01} \eta + c_{11} \xi \eta,
$$ 
which maps a square in computational space to a ruled surface embedded
in $\mathbb{R}^3$.
In order to further describe
this grid cell interface we define vectors
$$
\t_{(1)} =\frac{\partial \X}{\partial \xi} = c_{10} + c_{11} \eta, \quad \t_{(2)} =
\frac{\partial \X}{\partial \eta} = c_{01} +c_{11} \xi,
$$
which are tangent vectors to coordinate lines. 
The surface metric tensor $(a_{ij})_{i,j=1,2}$ is defined
as 
\begin{equation}
\label{eqn:aij}
a_{ij} = \t_{(i)} \cdot \t_{(j)}, \quad i,j, = 1,2,
\end{equation}
and $a=a_{11} a_{22} - a_{12} a_{21}$ denotes the determinant of the
metric tensor.
The unit normal vector to the grid
cell interface can be computed using
\begin{equation}
\label{eqn:unit_normal}
\n(\xi,\eta)  = \frac{\t_{(1)} \times \t_{(2)}}{\| \t_{(1)} \times \t_{(2)}\|}.
\end{equation}
Note that with this definition, $\n_{i-\frac{1}{2},j,k}$ is an
outward pointing normal vector for grid cell $(i-1,j,k)$ and an inward
pointing normal vector for cell $(i,j,k)$, compare with the signs of
the  terms in equation (\ref{eqn:mapped_mhd}).

The area element $dA$ on the grid cell interface transforms according to
\begin{equation}
\label{eqn:dA}
dA = \sqrt{a} \, d\xi \, d\eta = \| \t_{(1)} \times \t_{(2)} \| \, d\xi
\, d\eta.
\end{equation}
Note that the surface normal vector $\n$ and the determinant $a$ are functions of
$\xi$ and $\eta$. Analogously we can express the area element and a
normal vector at all other grid cell interfaces.
Using this, we can express (\ref{eqn:hexcell}) in computational space by
\begin{equation} \label{eqn:mapped_mhd}
\begin{split}
Q_{ijk}'(t) = \frac{-1}{|C_{ijk}|} \Bigg{[} & \int_0^1 \int_0^1 \left\{ \left( {\bf F}\cdot
  \n(\eta,\zeta) \sqrt{a(\eta,\zeta)} \right)_{i+\frac{1}{2} \, jk} - \left({\bf F} \cdot \n(\eta,\zeta)\sqrt{a(\eta,\zeta)} \right)_{i-\frac{1}{2} \, jk}\right\} \, d\eta \, d\zeta \\
 + &\int_0^1 \int_0^1 \left\{ \left({\bf F} \cdot \n(\xi,\zeta) \sqrt{a(\xi,\zeta )} \right)_{i \, j+\frac{1}{2} \, k} - \left({\bf F}
\cdot \n(\xi,\zeta)\sqrt{a(\xi,\zeta )}\right)_{i \, j-\frac{1}{2} \, k} \right\}  \, d\xi \, d\zeta \\
+ &\int_0^1 \int_0^1 \left\{ \left( {\bf F} \cdot \n(\xi,\eta)\sqrt{a(\xi,\eta)} \right)_{ij \, k+\frac{1}{2}}-
  \left({\bf F}\cdot\n(\xi,\eta)\sqrt{a(\xi,\eta)}\right)_{ij \, k-\frac{1}{2}}\right\} \, d\xi \, d\eta \Bigg{]}.
\end{split}
\end{equation}
We integrate over grid cell interfaces in
computational space using two-dimensional quadrature rules. 
The flux computation normal to the interface is again based on an
approximative Riemann solver using the
eigenvector decomposition of Barth.

\section{Spatial discretization of the non-conservative magnetic vector potential equation} 
\label{section:potential_space}
In this section we discuss the spatial discretization that is used to approximate
the evolution equation for the magnetic potential. 
For the 2D ideal MHD equations the relevant scalar evolution equation
for the magnetic potential is \eqref{eqn:A3_2D}, while in 3D the relevant weakly
hyperbolic system is \eqref{eqn:system_compact}--\eqref{eqn:N1N2N3}.
Just as in the previous section, we introduce the basic features of the
numerical scheme for a one-dimensional problem.

\subsection{One-dimensional weakly hyperbolic systems}\label{section:1d-potential}
We consider an equation of the general form
\begin{equation} \label{eqn:weak-hyp}
q_{,t} + A(x) \, q_{,x} = 0,
\end{equation}
with $q:\mathbb{R}^+ \times \mathbb{R} \rightarrow \mathbb{R}^m$ and $A(x)
\in \mathbb{R}^{m\times   m}$. We assume that system (\ref{eqn:weak-hyp}) is 
weakly hyperbolic, which means that  matrix $A(x)$ has real
eigenvalues but not always a complete set of right eigenvectors. As in
the classical hyperbolic case, we wish to
construct a method of the form (\ref{method1}). However, due to the
weak hyperbolicity, the fluctuations
${\cal A}^\pm \Delta q$ cannot be approximated via a wave
decomposition as in \cite{article:BLMR2002,sharpclaw,article:LeVeque97}.

Let $\tilde{q}$ be a piecewise polynomial reconstruction of the
function $q$ as described in (\ref{eqn:tildeq}). Furthermore, 
let $\tilde{A}(x)$ denote a piecewise polynomial approximation of the
matrix $A(x)$:
\begin{equation}
\begin{split}
\tilde{A}(x) & = \tilde{A}_i(x) \quad \text{for} \quad x \in \left(x_{i-\frac{1}{2}},x_{i+\frac{1}{2}} \right), \\
\tilde{A}_i(x) & = A(x) + {\cal O}\left(\Delta x^{p+1}\right).
\end{split}
\end{equation}
The values $q_{i-\frac{1}{2}}^{\pm}$ are the reconstructed values at the grid cell
interface as defined in (\ref{qpm}). Analogously we define 
\begin{equation}
A_{i-\frac{1}{2}}^- := \lim_{\varepsilon \rightarrow 0} \, \tilde{A}_{i-1}\left(x_{i-\frac{1}{2}} - \varepsilon \right) \quad \text{and}
\quad A_{i-\frac{1}{2}}^+ := \lim_{\varepsilon \rightarrow 0} \, \tilde{A}_i\left(x_{i-\frac{1}{2}} + \varepsilon \right).
\end{equation}
Using these reconstructions, we derive a
method of the form  
\begin{equation*}
 Q'_i(t) = -\frac{1}{\Delta x} \left( {\cal
    A}^- \Delta q_{i+\frac{1}{2}} + {\cal A}^+ \Delta
  q_{i-\frac{1}{2}} + {\cal A}\Delta q_i \right),
\end{equation*}
where $Q_i(t)$ is the cell average of the quantity $q$ and 
\begin{eqnarray}
\label{adq}
\Am \Delta q_i & \approx & \lim_{\varepsilon \rightarrow 0}
\int_{x_{i-\frac{1}{2}}+\varepsilon}^{x_{i+\frac{1}{2}}-\varepsilon} A(x) \,
q_x \, dx, \\ 
\label{apdq}
 \Am^+ \Delta q_{i-1/2} & \approx & \lim_{\varepsilon
 \rightarrow 0} \int_{x_{i-1/2}}^{x_{i-1/2} +
 \varepsilon} {A}(x) \, {q}_x \, dx, \\ 
 \label{amdq}
 \Am^- \Delta q_{i+ 1/2} & \approx & \lim_{\varepsilon
 \rightarrow 0} \int_{x_{i+1/2}- \varepsilon}^{x_{i+1/2}} {A}(x) \, {q}_x \,
 dx.
\end{eqnarray}
The integral in (\ref{adq}) is approximated by replacing $A(x)$ and
$q$ by the approximating polynomials $\tilde{A}_i$ and
$\tilde{q}_i$, i.e.\
$$
\Am \Delta q_i := \int_{x_{i-\frac{1}{2}}}^{x_{i+\frac{1}{2}}}
\tilde{A}_i(x)  \, \tilde{q}_{i,x} \, dx.
$$
This integral can then be computed using Gaussian quadrature of the
appropriate order.

In the weakly hyperbolic case, the fluctuations $\Am^\pm \Delta q$ 
cannot be computed with the wave propagation algorithm, since the
wave propagation method requires a complete set of eigenvectors.
In order to compute these fluctuations, we introduce a regularization
of $q$ at each grid cell interface of the  form
\begin{equation*}
q^\varepsilon_{i-\frac{1}{2}}(t,x) = \left\{ \begin{array}{c c c}
q_{i-\frac{1}{2}}^-(t) & : & x \le x_{i-\frac{1}{2}} - \varepsilon, \\
\Psi_{i-\frac{1}{2}} \left( \frac{ x-x_{i-\frac{1}{2}}+\varepsilon}{2 \varepsilon},t
\right) & : & x
\in \left( x_{i-\frac{1}{2}}-\varepsilon, x_{i-\frac{1}{2}}+
\epsilon \right), \\
q_{i-\frac{1}{2}}^+(t) & : & x\ge x_{i-\frac{1}{2}}+\varepsilon,
\end{array}\right.
\end{equation*}  
where $\Psi_{i-\frac{1}{2}}$ is a path connecting
$q_{i-\frac{1}{2}}^-(t)$ and $q_{i-\frac{1}{2}}^+(t)$. Using this
regularization, we first construct an expression for the sum 
of the left and right-going fluctuations at the grid cell interface
$x_{i-\frac{1}{2}}$, i.e., we wish to express
$$
{\cal A}^- \Delta q_{i-\frac{1}{2}} + {\cal A}^+ \Delta
q_{i-\frac{1}{2}} \approx \lim_{\varepsilon \rightarrow 0}
\int_{x_{i-\frac{1}{2}}-\varepsilon}^{x_{i-\frac{1}{2}}+\varepsilon} A(x) \,
q_x \, dx. 
$$
This integral is approximated by replacing $q$ with the regularized
function $q^\epsilon$ and by replacing the matrix $A$ with the
piecewise polynomial approximation $\tilde{A}(x)$, i.e., we set
\begin{equation}\label{int-Aqx1}
\begin{split}
{\cal A}^- \Delta q_{i-\frac{1}{2}} + {\cal A}^+ \Delta
q_{i-\frac{1}{2}} 
 & = \lim_{\varepsilon \rightarrow 0} \int_{x_{i-\frac{1}{2}}-\varepsilon}^{x_{i-\frac{1}{2}}+\epsilon}
\tilde{A}(x) \left(q_{i-\frac{1}{2}}^{\varepsilon}(t,x) \right)_{,x} \, dx.
\end{split}
\end{equation}
After the substitution $s = \frac{x-x_{i-\frac{1}{2}}+\epsilon}{2
  \epsilon}$ and by defining $\phi(s) :=
x_{i-\frac{1}{2}}-\epsilon + s(x_{i-\frac{1}{2}} + \epsilon -
(x_{i-\frac{1}{2}}-\epsilon))$, the integral in
(\ref{int-Aqx1}) can be written as
\begin{equation}
\int_{x_{i-\frac{1}{2}}-\varepsilon}^{x_{i-\frac{1}{2}}+\epsilon}
\tilde{A}(x) \left(q_{i-\frac{1}{2}}^{\varepsilon}(t,x) \right)_{,x} \, dx
=
\int_0^1 \tilde{A}(\phi(s)) \left(\Psi_{i-\frac{1}{2}}(s,t) \right)_{,s} \, ds.
\end{equation}
In order to resolve this integral,  we choose the simple straight-line
path 
\begin{equation}
\Psi_{i-\frac{1}{2}} = q_{i-\frac{1}{2}}^- + s \, \left(q_{i-\frac{1}{2}}^+ -
q_{i-\frac{1}{2}}^- \right),
\end{equation}
and obtain
\begin{equation}
\int_0^1 \tilde{A}(\phi(s)) \left(\Psi_{i-\frac{1}{2}}(s,t) \right)_{,s} \, ds
 =
\int_0^1 \tilde{A}\left(\phi(s) \right)   \, ds \, \left(q_{i-\frac{1}{2}}^+(t) - q_{i-\frac{1}{2}}^-(t) \right).
\end{equation}
The integral on the right-hand side can be split into two pieces:
\begin{equation}
\int_0^1 \tilde{A} (\phi(s)) \, ds = 
\int_0^{1/2} \tilde{A}_{i-1}(\phi(s))\, ds + \int_{1/2}^{1} \tilde{A}_i(\phi(s))\, ds,
\end{equation}
which corresponds to a division into pieces on the left and right of the
discontinuity $x_{i-\frac{1}{2}}$. We then take
the limit $\varepsilon \rightarrow 0$ and get:
\begin{equation}
\lim_{\varepsilon \rightarrow 0} \left( \int_0^{1/2} \tilde{A}_{i-1}(\phi(s))\,
ds + \int_{1/2}^{1} \tilde{A}_i(\phi(s))\, ds \right) 
 = 
\frac{1}{2} {A}_{i-\frac{1}{2}}^- +
\frac{1}{2} {A}_{i-\frac{1}{2}}^+ =: A\bigl|_{\Psi_{i-\frac{1}{2}}}.
\end{equation}
Thus we obtain
\begin{equation}
\label{eqn:nonc_fluctuation}
\Am^- \Delta q_{i-\frac{1}{2}} + \Am^+ \Delta q_{i-\frac{1}{2}}
=  A\bigl|_{\Psi_{i-\frac{1}{2}}} (q_{i-\frac{1}{2}}^+ -
q_{i-\frac{1}{2}}^-). 
\end{equation}

Following Castro et al. \cite{toro09} and using 
relationship \eqref{eqn:nonc_fluctuation}, we are now able
to define the left and right-going fluctuations.
With partial knowledge of the eigenstructure of $A\bigl|_{\Psi}$ (i.e.,\ if
we know the eigenvalues or
an estimate of the largest absolute value of the eigenvalues) we can define
\begin{eqnarray}\label{amdq-weakly}
\Am^- \Delta q_{i+1/2} = \frac{1}{2}
\Biggl[\underbrace{A\bigl|_{\Psi_{i+1/2}} \, 
- \, \alpha_{i+1/2} \, {\Id}}_{\text{generalized Rusanov flux}} \Biggr]  \Biggl({q}_{i+\frac{1}{2}}^+ -
{q}_{i+\frac{1}{2}}^-\Biggr)
\label{fluct_m_ev}
\end{eqnarray}
and
\begin{eqnarray}\label{apdq-weakly}
\Am^+ \Delta
q_{i-1/2}=\frac{1}{2} \Biggl[\underbrace{A\bigl|_{\Psi_{i-1/2}} \,
+ \, \alpha_{i-1/2} \, {\Id}}_{\text{generalized Rusanov flux}}\Biggr] \Biggl( {q}_{i-\frac{1}{2}}^+-{q}_{i-\frac{1}{2}}^-
\Biggr),
\label{fluct_p_ev}
\end{eqnarray}
where ${\Id} \in \mathbb{R}^{m\times m}$ is the identity matrix.
Here $\alpha$ is a positive number with
$$|\lambda^k| \leq \alpha, \quad \text{for} \quad {k=1,\ldots,m},$$
where $\lambda^k$ represents the $k^{\text{th}}$ eigenvalue of $A\bigl|_{\Psi}$,
see \cite{toro09}.

 Another possibility is to define the fluctuations without
using any knowledge of the eigenstructure of $A\bigl|_{\Psi}$:
\begin{eqnarray}
\Am^- \Delta q_{i+1/2} = \frac{1}{4}
\Biggl[2 A\bigl|_{\Psi_{i+\frac{1}{2}}}-\frac{\Delta x}{\Delta t} {\Id}
-\frac{\Delta t}{\Delta x} \left(A\bigl|_{\Psi_{i+\frac{1}{2}}} \right)^2 \Biggr]
\Biggl({q}_{i+\frac{1}{2}}^+ - {q}_{i+\frac{1}{2}}^-\Biggr)
\label{fluct_m}
\end{eqnarray}
and
\begin{eqnarray}
\Am^+ \Delta
q_{i-1/2}=\frac{1}{4} \Biggl[2 A\bigl|_{\Psi_{i-\frac{1}{2}}}+\frac{\Delta x}{\Delta t}
{\Id} +\frac{\Delta t}{\Delta x} \left( A\bigl|_{\Psi_{i-\frac{1}{2}}} \right)^2 \Biggr]
\Biggl( {q}_{i-\frac{1}{2}}^+-{q}_{i-\frac{1}{2}}^- \Biggr).
\label{fluct_p}
\end{eqnarray}
These fluctuations are derived from the generalized FORCE scheme
\cite{dumb09_1D,toro09}, which is a convex combination of the generalized
Lax-Friedrichs and the generalized Lax-Wendroff scheme.

\subsection{Multidimensional Cartesian grids}
We now consider the approximation of a weakly hyperbolic system of the form 
\begin{equation}
\label{eqn:2d-weakhyp}
q_{,t} + A(x,y) \, q_{,x} + B(x,y) \, q_{,y} = 0,
\end{equation}
with $q: \mathbb{R}^+ \times \mathbb{R}^2 \rightarrow \mathbb{R}^m$,
$A(x,y), \ B(x,y) \in \mathbb{R}^{m\times m}$.
In the semi-discrete  case, the method has the form
\begin{equation}
\begin{split}
Q_{ij}'(t)  = & -\frac{1}{\Delta x} \left( {\cal A}^+ \Delta
  q_{i-\frac{1}{2} \, j} + {\cal A}^- \Delta q_{i+\frac{1}{2} \, j} + {\cal
    A} \Delta q_{ij} \right)\\
&  - \frac{1}{\Delta y} \left( {\cal B}^+ \Delta q_{i,j-\frac{1}{2}}
  + {\cal B}^- \Delta q_{i \, j+\frac{1}{2}} + {\cal B} \Delta q_{ij} \right). 
\end{split}
\end{equation}
As in the case of hyperbolic systems (see Section \ref{multiD_hyp}), we
compute a multidimensional piecewise polynomial reconstruction
$\tilde{q}$ of $q$ using a least squares approach.
Furthermore, $\tilde{A}(x,y)$ and $\tilde{B}(x,y)$ are piecewise
polynomial approximations of the matrix valued functions. Using these
reconstructions we define
\begin{equation}
\begin{split}
{\Am} \Delta q_{ij} & := \frac{1}{\Delta y}
\int_{y_{j-\frac{1}{2}}}^{y_{j+\frac{1}{2}}}
\int_{x_{i-\frac{1}{2}}}^{x_{i+\frac{1}{2}}} \tilde{A}_{ij}(x,y)
\, \tilde{q}_{ij, x} \, dx \, dy, \\
{\cal B} \Delta q_{ij} & := \frac{1}{\Delta x} \int_{y_{j-\frac{1}{2}}}^{y_{j+\frac{1}{2}}}
\int_{x_{i-\frac{1}{2}}}^{x_{i+\frac{1}{2}}} \tilde{B}_{ij} (x,y)
\, \tilde{q}_{ij, y} \, dx \, dy.
\end{split}
\end{equation} 
The fluctuations ${\cal A}^\pm$ and ${\cal B}^\pm$ are defined in
analogy to (\ref{amdq-weakly}) and (\ref{apdq-weakly}) with the only
difference that we integrate over a grid cell interface and then
compute the average value.
For example, we compute
\begin{equation}
\Am^- \Delta q_{i+\frac{1}{2} \, j} = \frac{1}{\Delta y}
\int_{y_{j-\frac{1}{2}}}^{y_{j+\frac{1}{2}}} \frac{1}{2} 
\Biggl[ A \bigl|_{\Psi_{i+1/2 \, j}} (y) \, 
- \, \alpha_{i+\half}(y) \, {\Id} \Biggr]  \Biggl({q}_{i+\half \, j}^+(y) -
{q}_{i+\frac{1}{2} \, j}^- (y) \Biggr) \, dy.
\end{equation}  
We evaluate this integral using Gaussian quadrature formulas. This means
that we need to compute the matrix $A  \bigl|_{\Psi_{i+\half \, j}}$ at the
nodes of the quadrature formula. The other fluctuations are computed
in an analogous way.

\subsection{Mapped grids}
We now discuss the discretization of a weakly hyperbolic system of the
form (\ref{eqn:2d-weakhyp}) on a logically rectangular mapped grid.
For simplicity we restrict our considerations to the two-dimensional
case. The extension to the three-dimensional case is a combination of
the concepts discussed in this section and those of Section
\ref{section:3dmhd-mapped}.

Let $C_{ij}$ denote a grid cell in physical space and $|C_{ij}|$ the
area of the grid cell. Assume that the grid cell in physical space is
obtained by a mapping of the unit square in computational space. The
change of the cell average of the quantity $q$ in grid cell $C_{ij}$
is described by the ODE: 
\begin{equation}
\begin{split}
Q_{ij}'(t)  = - \frac{1}{|C_{ij}|} \Big( & 
\ell_{i-\frac{1}{2} \, j} \, {\Am^+ \Delta q_{i-\frac{1}{2} \, j}}  
+ {\tilde{A}_{ij}  \, \tilde{q}_{ij, x}} 
+ \ell_{i+\frac{1}{2} \, j}  \,  {\Am^- \Delta  q_{i+\frac{1}{2} \, j}} \\
& + \ell_{i \, j-\frac{1}{2}}  \,  {\Bm^+ \Delta
  q_{i,j-\frac{1}{2}}}  
+ {\tilde{B}_{ij}  \, \tilde{q}_{ij, y}} + \ell_{i \, j+\frac{1}{2}}  \,  {\Bm^- \Delta
    q_{i,j+\frac{1}{2}}} \Big), 
\end{split}
\end{equation}
where 
$\ell_{i+\frac{1}{2} \, j}$ and $\ell_{i \, j+\frac{1}{2}}$ denote the
length of the grid cell interfaces in physical space.
The integrals over the grid cell are defined as
\begin{equation}
\begin{split}
{\tilde{A}_{ij}  \, \tilde{q}_{ij, x}} & = \iint_{C_{ij}} \tilde{A}_{ij}(x,y)
 \, \tilde{q}_{ij, x} (x,y) \, dx  \, dy\\
& =   \int_0^1 \int_0^1 \sqrt{a_{ij}(\xi,\eta)}  \,  \tilde{A}_{ij}(x(\xi,\eta),y(\xi,\eta))  \, 
\tilde{q}_{ij, x} (x(\xi,\eta),y(\xi,\eta))\, d\xi  \, d\eta, \\
{\tilde{B}_{ij}  \, \tilde{q}_{ij, y}} & =  \int_0^1 \int_0^1 \sqrt{a_{ij}(\xi,\eta)}  \, 
\tilde{B}_{ij}(x(\xi, \eta),y(\xi,\eta))  \, \tilde{q}_{ij, y}(x(\xi,\eta),y(\xi,\eta)) \, d\xi  \, d\eta,
\end{split}
\end{equation}
where we use piecewise polynomial reconstructed functions $\tilde{q}$
and a piecewise polynomial representations of the matrix valued
functions.
For a planar two-dimensional mapped grid cell, the 
area element $\sqrt{a}$ is defined by
$$
\sqrt{a} = \Bigl| (c_{10}^1 + c_{11}^1 \eta )(c_{01}^2 + c_{11}^2 \xi ) -
(c_{01}^1 + c_{11}^1 \xi ) (c_{10}^2 + c_{11}^2 \eta ) \Bigr|.
$$
The coefficients $c_{10}, \ldots, c_{11} \in \mathbb{R}^2$ are special
cases of those detailed in \S \ref{section:3dmhd-mapped}, and can be obtained from
those coefficients by setting $\zeta$ and the third
component of $\X$ equal to zero.

In order to define the fluctuations ${\Am^\pm \Delta q}$ and ${\Bm^\pm
  \Delta q}$, we introduce $\n_{i\pm\frac{1}{2} \, j}$ and
$\n_{i \, j\pm\frac{1}{2}}$, the normal vectors at the interfaces
$(i\pm\frac{1}{2} \, j)$ and $(i \, j\pm\frac{1}{2})$. 
We define $A \bigl|_{\Psi}$ by replacing the matrix
$A$ in the one-dimensional formula of Section
\ref{section:1d-potential} by $\hat{A} = n^1 A + n^2 B$. Now we
integrate the fluctuations over each grid cell interface using
again the piecewise polynomial reconstructed values.  
For example we compute
\begin{equation}
{\Am^- \Delta q_{i+\frac{1}{2} \, j}} = 
  \int_0^1 \frac{1}{2} 
\Biggl[ \hat{A} \bigl|_{\Psi_{i+\half \, j}} (\eta) \, 
- \, \alpha_{i+\half}(\eta) \, {\Id} \Biggr]  \Biggl({q}_{i+\frac{1}{2} \, j}^+(\eta) -
{q}_{i+\half \, j}^- (\eta) \Biggr) \, d\eta.
\end{equation}
All the other fluctuations are computed analogously.

\subsection{Limiting with respect to the derivative}
\label{sec:special_limiters}
As first pointed out by Rossmanith in \cite{article:Ro04b}, a special limiting is
needed for the update of the magnetic potential in order to avoid
unphysical oscillation in derivatives of the potential and thus in the
magnetic field. He proposed an extension of the limiting mechanism
used in the wave propagation algorithm. 

Here we describe a limiting strategy that can be used in combination
with the update of the potential that is proposed in this paper.
The strategy of limiting with respect to the derivative will be
described for the 1D advection equation
\begin{eqnarray}\label{advection}
q_{,t} + u q_{,x} = 0,
\label{1d_advection_eqn}
\end{eqnarray} 
where $q:\mathbb{R}\times \mathbb{R}^+ \rightarrow \mathbb{R}$ is 
continous but not necessarily continously differentiable. 
The advection speed $u\in \mathbb{R}$ is assumed to be constant and we
restrict to the case $u>0$.
We use the method introduced in Section \ref{section:1d-potential} 
together with the third order accurate SSP-RK method to
update the advected quantity  $q$.

In order to limit the solution we add numerical viscosity\footnote{In the context of
the evolution of the magnetic potential in ideal MHD, the term {\it numerical viscosity}
really refers to {\it numerical resistivity}.} to the
problem, i.e.\ instead of (\ref{advection}) we approximate an  
advection diffusion equation of the form
\begin{equation}
q_{,t} + u q_{,x} = \varepsilon(x) \, q_{,x,x}.
\end{equation}
This scalar advection-diffusion equation can also be written 
as a system of first order equations
\begin{align}
q_{,t} + u q_{,x}  &=  \varepsilon(x) \, d_{,x}, \\
d - q_{,x} &= 0.
\end{align} 
Usually $\varepsilon(x)$ is chosen such that a thin layer is added near discontinuities
in the solution itself while keeping the high order
reconstruction away from the discontinuity (e.g., see \cite{persson}).
We will change the definition of $\varepsilon$ in such a way that
additional viscosity is added near jumps in the derivative of the solution.

The artificial viscosity must satisfy the following requirements: 
\begin{enumerate}
  \item{It should be small
enough to satisfy the stability constraint
\begin{eqnarray*}
\frac{\varepsilon(x) \Delta t}{(\Delta x)^2} \leq \frac{1}{2}
\end{eqnarray*}
of our explicit time-stepping method.}
\item{It should be large enough to avoid spurious oscillations in the derivative due to Gibbs
phenomenon.}
\item{The artificial viscosity should not degrade
the acurracy of the scheme for smooth solution structures.}
\end{enumerate}
To be able to handle all these requirements we define the viscosity in the
following way:
\begin{eqnarray}
\varepsilon(x) = \eta \alpha,
\end{eqnarray}
with 
\begin{eqnarray}
\eta = 0.2\frac{(\Delta x)^2}{\Delta t},
\label{viscosity}
\end{eqnarray}
and 
\begin{eqnarray}
\alpha = \begin{cases} \frac{1}{2} \bigl[ 1+\text{sin}\left(\pi \, \Delta S - \frac{\pi}{2} \right) \bigr] & S >
\sigma_{ii}, \\ 0 & S \leq \sigma_{ii}, \end{cases}
\label{smoothness_ind}
\end{eqnarray}
where 
\begin{equation}
\label{eqn:artvisc_params}
\begin{split}
S = \text{max}(\sigma_{i \, i-1},\sigma_{i \, i+1}), \quad \Delta S = \bigl| S- \sigma_{ii} \bigr|, \\
\sigma_{ik} = \frac{\lambda_{ik}}{((\Delta x)^4 + \Sigma_{ik})^e}, \quad
\Sigma_{ik} = (\tilde{q}_{,x,x}(k)(\Delta x)^2)^2, \quad k \in {(i-1,i,i+1)}.
\end{split}
\end{equation}
The parameters are set to $\lambda_{i \, i-1}=\lambda_{i \, i+1}=1, \lambda_{ii}=1000$ and $e=4$ in the test below. 
Note that we use a double index on the parameters $\lambda$ and $\sigma$; the logic
behind this notation is that the first index denotes the current cell that is being considered and the
second index denotes the cell to which the current cell is being compared.
$\eta$ is the maximum amount of viscosity that could be added and $\alpha$ is a
smoothness indicator, with values between 0 and 1, such that $\alpha$ approaches $1$ on very steep gradients in the derivative (i.e., steep second
derivative), and $\alpha$ is $0$ in smooth regions of the derivative.
The idea behind this smoothness indicator is that we do not take any viscosity
(large linear weight $\lambda_{ii}$), unless the second derivative in one of the neighbouring cells is much higher than in cell $i$, 
in which case we will add some viscosity depending on the difference of the smoothness measure $\sigma_{ii}$ and the maximum of the smoothness measures in the neighouring cells ($\sigma_{i \, i-1}, \sigma_{i \, i+1})$.
This means that the parameter $\lambda_{ii}$ controls how much the second derivatives have to differ, before adding any viscosity. We found that $\lambda_{ii} \in [10^1,10^3]$ seem to give good results.
From the form of the viscosity, it is clear that we do not obtain a further time
step restriction due to the viscosity limiting. 
Taking the time step restriction into account, one could also choose $\eta= \mathcal{O}(\Delta x)$ empirically.

We now consider the 1D advection example that was first introduced in \cite{article:Ro04b}.
Consider \eqref{advection} with $u=1$ on the interval $0 \leq x \leq 1$
and double periodic boundary conditions. The initial data is the
following piecewise linear function:
\begin{eqnarray*}
q(0,x) = \begin{cases} 0 &\text{if} \quad x \leq0.25, \\
(x-0.25)/0.075 & \text{if} \quad 0.25 \leq x \leq 0.4,\\
2 & \text{if} \quad 0.4 \leq x \leq 0.6,\\
(0.75 - x)/0.075 & \text{if} \quad 0.6 \leq x \leq 0.75,\\
0 & \text{if} \quad 0.75 \leq x.
\end{cases}
\end{eqnarray*}
Note that $q(0,x)$ is continuous, but its first derivative is discontinuous.
The challenge is to control oscillations in both $q(t,x)$ and $q_{,x}(t,x)$.

\begin{figure}
\begin{center}
(a)\includegraphics[width=0.46\textwidth]{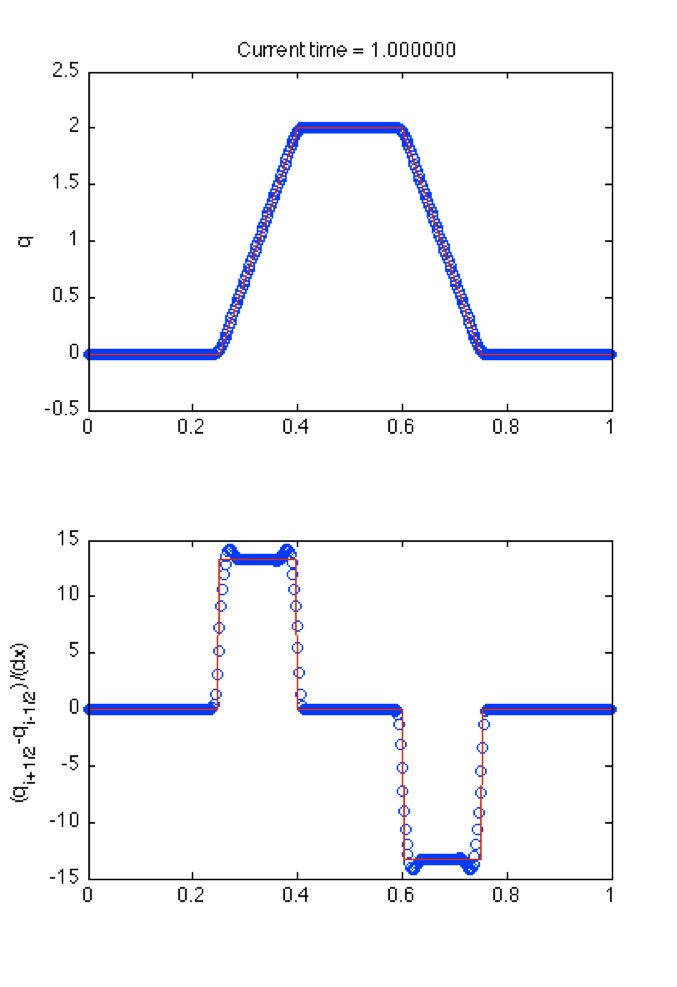}\hfill
(b)\includegraphics[width=0.46\textwidth]{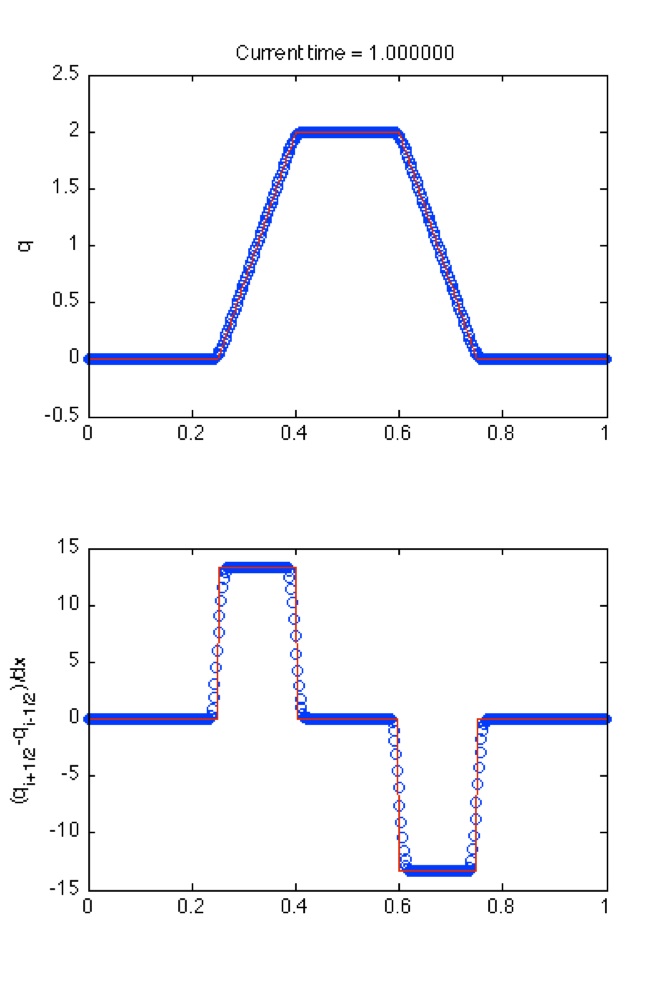}
\caption{The solution to the 1D advection equation and its derivative on a
periodic domain after one revolution using the $3^{rd}$ order
algorithm. Panel (a) shows how standard WENO limiting performs on the solution,
and Panel (b) shows how the limiter proposed in this work performs.
\label{fig:adv1d_limi}}
\end{center}
\end{figure}

\noindent
We compare the solution and its derivative as computed by two
different limiters: (a) a standard WENO limiter that is essentially
non-oscillatory in $q(t,x)$, but not necessarily in $q_{,x}(t,x)$, and
(b) the proposed limiter as described above. 
For further
details on the WENO limiting see Canestrelli et al.\ \cite{dumb09_1D},
Titarev et al.\ \cite{ti_10} and Tsoutsanis et al.\ \cite{ts_11}. 
Third order accurate average values of the derivatives $q_{,x}(t,x)$ are
computed using
\begin{equation}
\begin{split}
\frac{1}{\Delta x} \int_{x_{i-\frac{1}{2}}}^{x_{i+\frac{1}{2}}}
q_{,x}(t,x) \, dx & = \frac{1}{\Delta x} \left[ q\left(t,x_{i+\frac{1}{2}}\right) -
  q\left(t,x_{i-\frac{1}{2}}\right) \right] \\
& \approx \frac{1}{\Delta x} \left[ \frac{1}{2}\left(q_{i+\frac{1}{2}}^-(t) +
  q_{i+\frac{1}{2}}^+(t)\right)
- \frac{1}{2}\left(q_{i-\frac{1}{2}}^-(t) +
 q_{i-\frac{1}{2}}^+(t)\right) \right].
\end{split}
\end{equation}
We use a CFL number of $0.7$ in both simulations.
The solution and its derivative at time $t=1$ (i.e., after one revolution) are shown
in Figure \ref{fig:adv1d_limi}, where Panel (a) is the WENO limiter
and Panel (b) is the proposed limiter.
The approximation of $q(1,x)$ looks very similar
in both cases.
However, there are obvious differences in the computed results for
$q_{,x}(t,x)$; the proposed limiter avoids spurious oscillations in $q_{,x}(t,x)$. 
In comparison with the TVD scheme of \cite{article:Ro04b} the limiting introduced here
(which can also be used in a one-step scheme) seems to be slightly more
diffusive. This however is a general problem if a method of lines approach is used instead
of a Lax-Wendroff type method.

In the multidimensional case, we compute the smoothness indicator $S$
by taking all neighboring grid cells into account that share a face, an edge or a
corner point with the considered grid cell. 
For the computation of  $\Sigma$, the second derivative of $\tilde{q}$
is replaced by the Laplacian and $(\Delta x)^2$ is replaced by 
$(|C_{ij}|)$ in the two-dimensional case, and  $(|C_{ijk}|)$ in
the three-dimensional case.
As characteristic length in the computation of $\eta$ and $\sigma$ ($\Delta x$ in 1D), 
we use the length of the smallest edge of the considered grid cell in the multidimensional case.
Denoting the characteristic length by $\Delta s$, we use $\eta = 0.5 \Delta s$ in all MHD computations below and 
$\lambda_{i \, i-1} = 1$, $\lambda_{i \, i+1} = 1$, and $\lambda_{ii}=10^3$ (see formulas in \eqref{eqn:artvisc_params}).
  
\section{The discretization of $\nabla \times \Av$}
\label{sec:curlA}
During each stage of the constrained transport algorithm described in Section
\ref{section:temp}, we obtain the corrected  magnetic field from taking the curl of
the magnetic potential.
In our previous work on CT schemes \cite{article:HeRoTa10,article:Ro04b}, the cell average
values of the magnetic field were computed using centered finite
difference approximations. In order for the scheme developed in this work
to be third order accurate both on Cartesian and mapped grids,
we must generalize the central finite difference approach. 
We describe this generalization in this section.

We wish to compute a discrete version of the curl of the magnetic
vector potential:
\begin{equation}
\B = \nabla \times \Av = \left[  A^3_{,y} - A^2_{,z}, \quad
 A^1_{,z}-   A^3_{,x}, \quad  A^2_{,x} - A^1_{,y} \right]^T.
\end{equation}
In particular, we wish to compute a third order accurate estimate
of the cell average of $\B$. For example, using the
divergence theorem, the cell average of the first component of the magnetic field, $B^1$,
in grid cell $(i,j,k)$ can be expressed in the form
\begin{eqnarray}
B^1_{ijk} & = & \frac{1}{|C_{ijk}|} \iiint_{C_{ijk}}
\left(A^3_{,y} - A^2_{,z} \right) \, dV \nonumber \\
& = & \frac{1}{|C_{ijk}|} \iiint_{C_{ijk}} \nabla \cdot
\left( \begin{array}{c}
0\\A^3\\-A^2\end{array}\right) \, dV \nonumber\\
& = & \frac{1}{|C_{ijk}|} \varoiint_{\partial C_{ijk}} \left( \begin{array}{c}
0\\A^3\\-A^2\end{array}\right) \cdot \nu \, dA \nonumber\\
& = & \frac{1}{|C_{ijk}|} \varoiint_{\partial C_{ijk}} \left( A^3 \nu^2
  - A^2 \nu^3\right) \, dA, \label{eqn:approx-B1}
\end{eqnarray}
where $\nu = (\nu^1,\nu^2,\nu^3) \in \mathbb{R}^3$ is the outward pointing unit normal vector along the boundary
$\partial C_{ijk}$ of the considered grid cell.
The integrals over all the faces of a grid cell can be evaluated in
comptational space as discussed in Section
\ref{section:3dmhd-mapped}. 
Using the definition of the surface normal vector $\n$ and the
determinant $a$ of the mapping as defined for the grid cell interfaces
in Section
\ref{section:3dmhd-mapped},
we obtain
\begin{equation}\label{eqn:approx-B1-2}
\begin{split}
B_{ijk}^1 
& =  \frac{1}{|C_{ijk}|} \sum_{\pm = +,-} \Big\{ \pm
\int_0^1 \int_0^1   \left( \left(A^3 n^2(\eta,\zeta) - A^2
  n^3(\eta,\zeta)
\right)\sqrt{a(\eta,\zeta)}\right)_{i \pm \frac{1}{2},j,k}  \, d\eta
d\zeta\\
& \qquad
\pm \int_0^1 \int_0^1  \left( \left(A^3 n^2(\xi,\zeta) - A^2
  n^3(\xi,\zeta)
\right)\sqrt{a(\xi,\zeta)}\right)_{i,j \pm \frac{1}{2},k}  \, d\xi
d\zeta\\
& \qquad
\pm \int_0^1 \int_0^1  \left( \left(A^3 n^2(\xi,\eta) - A^2
  n^3(\xi,\eta)
\right)\sqrt{a(\xi,\eta)}\right)_{i,j,k \pm \frac{1}{2}}  \, d\xi
d\eta \Big\}.
\end{split}
\end{equation}

In order to evaluate the surface integral in \eqref{eqn:approx-B1-2}, 
we first replace the integrals with a Gaussian quadrature rule of the
appropriate degree of precision, then we reconstruct $A^2$ and $A^3$ on the
grid cell boundaries at each of the quadrature points 
using the piecewise polynomial reconstruction of $\Av$. 
To be more precise, at each of these quadrature points we take the value of $\Av$ to
be the average of the reconstructed values from the two grid cells that share a face. 
We note that this discretization leads to a conservative update of the magnetic field. 
For a smooth magnetic potential the average of the reconstructed values agrees with
the correct interface value of the magnetic potential up to the order
used in the reconstruction.  
The computation of the cell averages of $B^2$ and $B^3$ can be done in
an analogous way using
\begin{equation*}
\begin{split}
B_{ijk}^2 & = \frac{1}{|C_{ijk}|} \varoiint_{\partial C_{ijk}} \left(
  A^1 \nu^3 - A^3 \nu^1 \right) \, dA \\
B_{ijk}^3 & =  \frac{1}{|C_{ijk}|} \varoiint_{\partial C_{ijk}} \left(
A^2 \nu^1 - A^1 \nu^2 \right) \, dA.
\end{split}
\end{equation*}

\begin{claim}
The constrained transport method as described in this work locally conserves (and therefore
also globally conserves) the magnetic field, $\B$, from one Runge-Kutta stage to the next.
\end{claim}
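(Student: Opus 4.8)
The plan is to show that the cell-averaged magnetic field components produced by the discrete curl \eqref{eqn:approx-B1-2} (and its analogues for $B^2$ and $B^3$) are assembled from a single-valued flux at every interior face. Local conservation, in the finite-volume sense, is exactly the statement that the discrete flux leaving a cell through a given face equals the negative of the flux entering the adjacent cell through that same face; global conservation then follows by summing over all cells and observing that the interior contributions telescope.

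First I would rewrite $\varoiint_{\partial C_{ijk}} \left(A^3 \nu^2 - A^2 \nu^3\right) dA$ as the six-face sum of \eqref{eqn:approx-B1-2}, parametrizing each ruled face by the bilinear map of Section \ref{section:3dmhd-mapped} and using the area element \eqref{eqn:dA} and unit normal \eqref{eqn:unit_normal}. On each face the integrand becomes $\left(A^3 n^2 - A^2 n^3\right)\sqrt{a}$ evaluated at the Gaussian nodes in $(\xi,\eta)$ parameter space, and the explicit $\pm$ signs in \eqref{eqn:approx-B1-2} encode the outward orientation of each face.

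The crucial step is to examine one interior face $F$ shared by neighboring cells, say $C_{ijk}$ and $C_{i+1,j,k}$. Because the four vertices of $F$ are common to both cells, the bilinear map describing $F$ --- and therefore the quadrature nodes, the weights, the area element $\sqrt{a}$, and the unit normal $\n$ --- is identical whether $F$ is viewed from $C_{ijk}$ or from $C_{i+1,j,k}$. In \eqref{eqn:approx-B1-2} this same face enters the sum for $C_{ijk}$ with a $+$ sign (it is the $i+\frac{1}{2}$ face of that cell) and the sum for $C_{i+1,j,k}$ with a $-$ sign (it is the $i-\frac{1}{2}$ face of that cell), while $\n$, $\sqrt{a}$, and the quadrature rule are shared. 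Moreover, as stated in Section \ref{sec:curlA}, at each node the value of $\Av$ entering the integrand is the \emph{average} of the reconstructed values from the two adjacent cells, and this average is the same number viewed from either side. Hence the integrand $\left(A^3 n^2 - A^2 n^3\right)\sqrt{a}$ is single-valued at every node of $F$, so the contribution of $F$ to $|C_{ijk}|\,B^1_{ijk}$ is exactly the negative of its contribution to $|C_{i+1,j,k}|\,B^1_{i+1,j,k}$. I would repeat the identical argument for $B^2$ and $B^3$ using the integrands $A^1\nu^3 - A^3\nu^1$ and $A^2\nu^1 - A^1\nu^2$.

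With single-valued interface fluxes in hand, local conservation is immediate, and global conservation follows by summing $|C_{ijk}|\,\B_{ijk}$ over all cells: each interior face appears in exactly two cells with opposite sign but identical flux value, so these cancel pairwise and only the physical-boundary faces survive. Since this curl reconstruction is applied in precisely the same way at every SSP-RK stage, the flux-matching --- and hence the conservation --- holds stage by stage, which is the claim. I expect the main obstacle to be not the telescoping itself but the verification of exact geometric and quadrature consistency across a shared face: that $\sqrt{a}$, the node locations, the weights, and $\n$ genuinely coincide from both sides. This is where conformity of the mapped grid and the shared-vertex bilinear parametrization of faces are essential, and where a nonconforming mesh or an inconsistent normal convention would break the argument.
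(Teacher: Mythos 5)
Your proposal is correct, and it reaches the conclusion by a slightly different decomposition than the paper. The paper's proof works with the \emph{increment} between stages: it writes a single Euler step as $\B^{n+1}-\B^{n} = \Delta t\, \nabla \times \E^{n} = \Delta t\, \nabla \cdot \left( \left[ \epsilon_{ijk} E^k \right]^n \right)$ and then observes that this tensor divergence is discretized by surface integrals of the form \eqref{eqn:approx-B1-2} (with $\Av$ replaced by $\E$), using the \emph{same} averaged values of $\E$ at the Gaussian nodes on either side of each face; telescoping of the increment follows. You instead work with the \emph{state}: the cell averages $|C_{ijk}|\,\B_{ijk}$ at every stage are themselves assembled from single-valued face data (the averaged reconstruction of $\Av$, the shared bilinear face parametrization, the common $\sqrt{a}$, $\n$, nodes and weights), so by linearity of the discrete curl any difference of states between stages inherits the same face-wise cancellation. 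The two arguments rest on the identical structural fact about \eqref{eqn:approx-B1-2} --- single-valued interface data combined with opposite orientation signs --- but they buy slightly different things. Your version is more general: it shows conservation holds no matter how $\Av$ is updated, since it is a property of the discrete curl operator alone, and it does not require the update of $\Av$ to have Euler-step (flux) structure. The paper's version is less general but identifies the actual numerical flux for $\B$ as the electric field, which recovers the classical constrained-transport/Yee interpretation in which face or edge values of $\E$ play the role of the conserved flux of the magnetic field. Your concluding caveat about geometric and quadrature consistency across shared faces is exactly the right thing to check, and it holds here because the mesh is conforming and the paper's normal convention (a single normal $\n_{i-\frac{1}{2},j,k}$, outward for one neighbor and inward for the other, with signs carried by the flux sum) makes the face data genuinely single-valued.
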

\begin{proof}
Without loss of generality consider a single Euler step (i.e., a single stage in the SSP time-stepping scheme)
on the magnetic field:
\begin{equation}
\begin{split}
\B^{n+1} &= \nabla \times \Av^{n+1} = \nabla \times \left( \Av^{n} + \Delta t \, \E^{n} \right) \\ 
	&= \B^{n} + \Delta t \, \nabla \times \E^{n} = 
	\B^{n} + \Delta t \, \nabla \cdot \left( \left[ \epsilon_{ijk} E^k \right]^n \right).
\end{split}
\end{equation}
The divergence of the tensor $[\epsilon_{ijk} E^k]$ is computed via
surface integrals of the form \eqref{eqn:approx-B1-2}, where $\Av$ is replaced with $\E$.
The values of $\E = \left( \nabla \times \Av \right) \times \u$ are computed from constrained transport as described in Section \ref{section:potential_space}
and then averaged onto appropriate Gaussian quadrature points on each face.
In particular, note that the same values of  $\E$ are used to update
the cell average of $\nabla \cdot \left([\epsilon_{ijk} E^k]\right)$ on either side of the face. Therefore, we are guaranteed
that the discrete magnetic field is locally conserved over each Euler time step.
\end{proof}

\section{Numerical Results}
\label{section:test-computations}
Several numerical examples on Cartesian and mapped grids are
shown in this section. These examples are used to both
verify the third-order accuracy in space and time, as well as the
shock-capturing ability of the  scheme proposed in this work.

\subsection{Smooth Alfv\'en wave problem}
We consider two variants of the so-called
smooth Alfv\'en wave problem: a (1) 2.5D
variant and (2) 3D variant.
Note that the term 2.5D here refers to the case of a two-dimensional computational domain, 
but with vector unknowns $\u$ and $\B$ that have three non-trivial
components.
In 2.5D for the magnetic potential we solve equation 
(\ref{eqn:system_compact}) with $\Av_{,z} = 0$ using the method described in
Section \ref{section:potential_space}.

\subsubsection{2.5D problem}
In this case the analytical solution
consists of a sinusoidal wave propagating at constant speed in
direction ${\bf n} = (\cos \phi, \sin \phi, 0)$ without changing shape. We
use $\phi = \arctan (0.5)$ and solve in the domain
$(x,y) \in [0,(\cos \phi)^{-1}] \times [0, (\sin \phi)^{-1}]$ with double periodic
boundary conditions. The initial values of all components are
described in \cite{article:HeRoTa10}. Similar problems were
also considered by Rossmanith \cite{article:Ro04b} and T\'oth \cite{article:To00}.

In Table \ref{table:alfen_cart} we show the results of a numerical
convergence study of the new constrained transport algorithm on a 2D Cartesian grid. We obtain full third order convergence
rates in all conserved quantities, as well as all the magnetic vector potential
components.

\begin{table}
\begin{center}
  \begin{tabular}{|c||c|c|c|c|c||}
    \hline
    &
    {\normalsize $\rho$}&
    {\normalsize $\rho u^1$}&
    {\normalsize $\rho u^2$}&
    {\normalsize $\rho u^3$}&
    {\normalsize ${\mathcal E}$} \\    
    \hline \hline 
    {\normalsize $32 \times 64$}  &
    {\normalsize $5.128 \times 10^{-4}$}  &  
    {\normalsize $2.413 \times 10^{-4}$}  &
    {\normalsize $6.059 \times 10^{-4}$}  &
    {\normalsize $5.142 \times 10^{-4}$}  &      
    {\normalsize $1.301 \times 10^{-4}$}   \\
    \hline
    {\normalsize $64 \times 128$}  &  
    {\normalsize $6.495 \times 10^{-5}$}  &  
    {\normalsize $3.097 \times 10^{-5}$}  &
    {\normalsize $7.548 \times 10^{-5}$}  &
    {\normalsize $6.469 \times 10^{-5}$}  &      
    {\normalsize $1.653 \times 10^{-5}$}   \\
    \hline
    {\normalsize $128 \times 256$}  &  
    {\normalsize $8.150 \times 10^{-6}$}  &  
    {\normalsize $3.924 \times 10^{-6}$}  &
    {\normalsize $9.412 \times 10^{-6}$}  &
    {\normalsize $8.099 \times 10^{-6}$}  &      
    {\normalsize $2.075 \times 10^{-6}$}   \\
    \hline
    {\normalsize $256 \times 512$}  &  
    {\normalsize $1.020 \times 10^{-6}$}  &  
    {\normalsize $4.933 \times 10^{-7}$}  &
    {\normalsize $1.176 \times 10^{-6}$}  &
    {\normalsize $1.014 \times 10^{-6}$}  &      
    {\normalsize $2.599 \times 10^{-7}$}   \\    
    \hline \hline 
    {\normalsize {\bf EOC}} & 
    {\normalsize 2.998}  & 
    {\normalsize 2.992}  & 
    {\normalsize 3.000}  & 
    {\normalsize 2.998}  & 
    {\normalsize 2.997}  \\
     \hline
  \end{tabular}

\bigskip

   \begin{tabular}{|c||c|c|c|}
    \hline
    &
    {\normalsize $B^1$} & 
    {\normalsize $B^2$} & 
    {\normalsize $B^3$} \\
    \hline \hline
    {\normalsize $32 \times 64$}  &  
    {\normalsize $2.877 \times 10^{-4}$} & 
    {\normalsize $5.754 \times 10^{-4}$}  &  
    {\normalsize $5.123 \times 10^{-4}$} \\
    \hline
    {\normalsize $64 \times 128$}  &  
    {\normalsize $3.583 \times 10^{-5}$} & 
    {\normalsize $7.166 \times 10^{-5}$}  &  
    {\normalsize $6.437 \times 10^{-5}$} \\
\hline
    {\normalsize $128 \times 256$}  &
    {\normalsize $4.464 \times 10^{-6}$} & 
    {\normalsize $8.928 \times 10^{-6}$}  &  
    {\normalsize $8.057 \times 10^{-6}$} \\
\hline
    {\normalsize $256 \times 512$}  &
    {\normalsize $5.581 \times 10^{-7}$} & 
    {\normalsize $1.116 \times 10^{-6}$}  &  
    {\normalsize $1.008 \times 10^{-6}$} \\
    \hline \hline 
   {\normalsize {\bf EOC}} & 
   {\normalsize 3.000} & 
   {\normalsize 3.000} & 
   {\normalsize 2.999} \\
 \hline
  \end{tabular}

\bigskip
  \begin{tabular}{|c||c|c|c|}
    \hline
   & 
    {\normalsize $\A^1$} & 
    {\normalsize $\A^2$} & 
    {\normalsize $\A^3$} \\
    \hline \hline
    {\normalsize $32 \times 64$}  &  
    {\normalsize $3.583 \times 10^{-5}$} & 
    {\normalsize $6.921 \times 10^{-5} $}  &  
    {\normalsize $1.053 \times 10^{-4}$} \\
    \hline
    {\normalsize $64 \times 128$}  &  
    {\normalsize $4.489 \times 10^{-6}$} & 
    {\normalsize $8.628 \times 10^{-6} $}  &  
    {\normalsize $1.327 \times 10^{-5}$} \\
\hline
    {\normalsize $128 \times 256$}  &
    {\normalsize $5.626 \times 10^{-7}$} & 
    {\normalsize $1.076 \times 10^{-6} $}  &  
    {\normalsize $1.661 \times 10^{-6}$} \\
\hline
    {\normalsize $256 \times 512$}  &
    {\normalsize $7.041 \times 10^{-8}$} & 
    {\normalsize $1.344 \times 10^{-7} $}  &  
    {\normalsize $2.077 \times 10^{-7}$} \\
    \hline \hline 
   {\normalsize {\bf EOC}} & 
   {\normalsize 2.998} & 
   {\normalsize 3.000} & 
   {\normalsize 2.999} \\
 \hline
  \end{tabular}
\end{center}
\caption{\label{table:alfen_cart}Convergence study of the 2.5D smooth Alfv\'en wave problem
on a Cartesian mesh. The tables show the $L_1$-error at time $t=1$ 
in the different physical quantities computed using the constrained transport algorithm. The
experimental order of convergence (EOC) is computed by comparing the
error for the two finest grids. All simulations are performed using a CFL number of $0.5$.}
\end{table} 

We have also computed this test problem on a mapped grid, which is a
scaled version of a grid from
Colella et al.\ \cite{article:CDHM2011}. 
The mapping has the form
\begin{equation}\label{eqn:mapping-Colella}
T(x_c,y_c) = \left( \begin{array}{c}
x_c\\y_c\end{array}\right) + \beta \sin\left( \frac{2 \pi
x_c}{L} \right) \sin\left( \frac{2 \pi y_c}{M} \right) \left( \begin{array}{c}1\\1\end{array}\right), 
\end{equation}
where $(x_c, y_c)$ are the coordinates in computational space, $\beta
\in \mathbb{R}$ is a parameter which determines the smoothnes of the
grid, and $L$, $M$ describe the length of the domain in the $x$ and
$y$ direction, respectively.
Here we use $\beta = 0.1$, $L = (\cos \phi)^{-1}$ and  $M=(\sin \phi)^{-1}$.
The mapped grid is shown in
Panel (a) of Figure \ref{fig:mapped1}.
Table \ref{table:alfen_mapped} confirms the third order
convergence rate also for the mapped grid computation.
Note that the error on the mapped grid is only slightly larger
than the error on the Cartesian grid. 
However, we note that the least squares approach that we used for the piecewise polynomial
reconstruction leads to the correct order only on grids that do not have highly stretched cells (e.g., see Petrovskaya \cite{article:Petrovskaya2007}).

\begin{figure}
\begin{center}
(a)\includegraphics[scale=0.41]{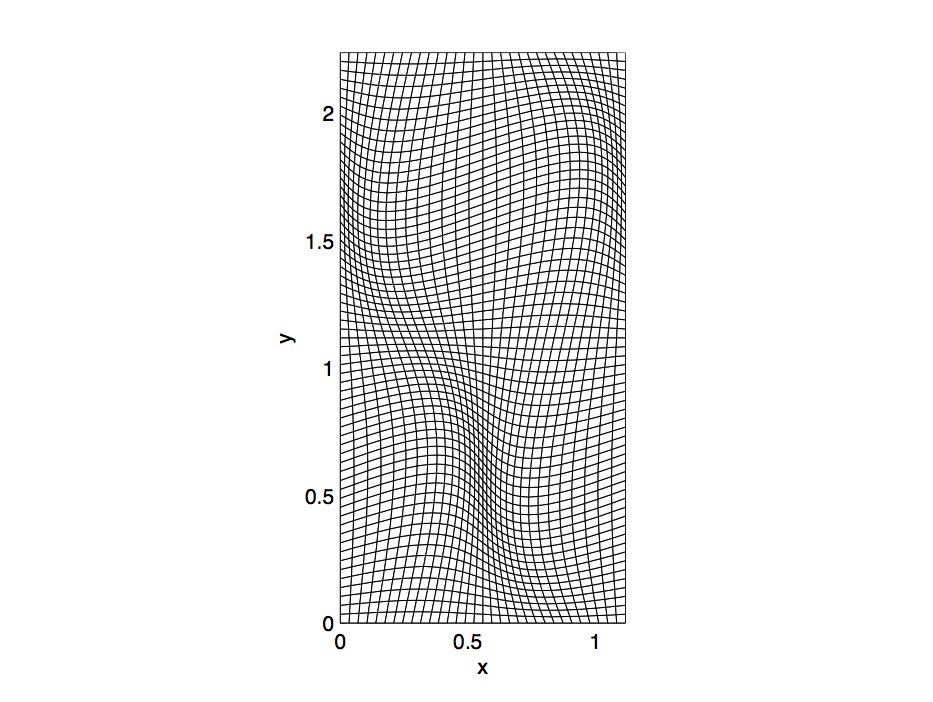} 
(b)\includegraphics[scale=0.41]{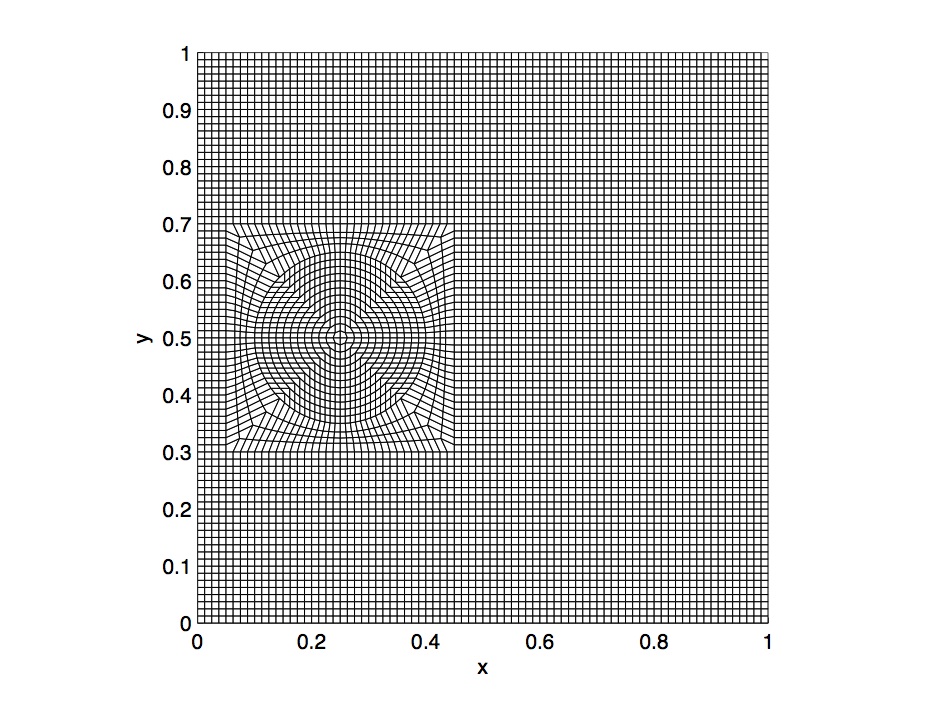}
\caption{\label{fig:mapped1}Shown in this figure are the mapped
grids used in (a) the convergence
  study of the smooth Alfv\'en wave problem and (b) the cloud-shock interaction problem.}
  \end{center}
\end{figure}

\begin{table}
\begin{center}
  \begin{tabular}{|c||c|c|c|c|c||}
    \hline
    &
    {\normalsize $\rho$}&
    {\normalsize $\rho u_1$}&
    {\normalsize $\rho u_2$}&
    {\normalsize $\rho u_3$}&
    {\normalsize ${\mathcal E}$} \\    
    \hline \hline 
    {\normalsize $32 \times 64$}  &
    {\normalsize $1.975 \times 10^{-3}$}  &  
    {\normalsize $7.167 \times 10^{-4}$}  &
    {\normalsize $1.223 \times 10^{-3}$}  &
    {\normalsize $7.711 \times 10^{-4}$}  &      
    {\normalsize $5.230 \times 10^{-4}$}   \\
    \hline
    {\normalsize $64 \times 128$}  &  
    {\normalsize $2.631 \times 10^{-4}$}  &  
    {\normalsize $9.756 \times 10^{-5}$}  &
    {\normalsize $1.557 \times 10^{-4}$}  &
    {\normalsize $9.745 \times 10^{-5}$}  &      
    {\normalsize $6.955 \times 10^{-5}$}   \\
    \hline
    {\normalsize $128 \times 256$}  &  
    {\normalsize $3.333 \times 10^{-5}$}  &  
    {\normalsize $1.245 \times 10^{-5}$}  &
    {\normalsize $1.953 \times 10^{-5}$}  &
    {\normalsize $1.222 \times 10^{-5}$}  &      
    {\normalsize $8.820 \times 10^{-6}$}   \\
    \hline \hline 
    {\normalsize {\bf EOC}} & 
    {\normalsize 2.981}  & 
    {\normalsize 2.970}  & 
    {\normalsize 2.996}  & 
    {\normalsize 2.996}  & 
    {\normalsize 2.980}  \\
     \hline
  \end{tabular}

  \bigskip
  
   \begin{tabular}{|c||c|c|c|}
    \hline
    &
    {\normalsize $B^1$} & 
    {\normalsize $B^2$} & 
    {\normalsize $B^3$} \\
    \hline \hline
    {\normalsize $32 \times 64$}  &  
    {\normalsize $4.827 \times 10^{-4}$} & 
    {\normalsize $9.735 \times 10^{-4}$}  &  
    {\normalsize $9.483 \times 10^{-4}$} \\
    \hline
    {\normalsize $64 \times 128$}  &  
    {\normalsize $6.014 \times 10^{-5}$} & 
    {\normalsize $1.213 \times 10^{-4}$}  &  
    {\normalsize $1.219 \times 10^{-4}$} \\
\hline
    {\normalsize $128 \times 256$}  &
    {\normalsize $7.511 \times 10^{-6}$} & 
    {\normalsize $1.513 \times 10^{-5}$}  &  
    {\normalsize $1.534 \times 10^{-5}$} \\
    \hline \hline 
   {\normalsize {\bf EOC}} & 
   {\normalsize 3.001} & 
   {\normalsize 3.002} & 
   {\normalsize 2.990} \\
 \hline
  \end{tabular}
 
  \bigskip
  
  \begin{tabular}{|c||c|c|c|}
    \hline
    &
    {\normalsize $\A^1$} & 
    {\normalsize $\A^2$} & 
    {\normalsize $\A^3$} \\
    \hline \hline
    {\normalsize $32 \times 64$}  &  
    {\normalsize $7.355 \times 10^{-5}$} & 
    {\normalsize $1.260 \times 10^{-4} $}  &  
    {\normalsize $1.749 \times 10^{-4}$} \\
    \hline
    {\normalsize $64 \times 128$}  &  
    {\normalsize $9.358 \times 10^{-6}$} & 
    {\normalsize $1.595 \times 10^{-5} $}  &  
    {\normalsize $2.216 \times 10^{-5}$} \\
\hline
    {\normalsize $128 \times 256$}  &
    {\normalsize $1.175 \times 10^{-6}$} & 
    {\normalsize $2.000 \times 10^{-6} $}  &  
    {\normalsize $2.776 \times 10^{-6}$} \\
    \hline \hline 
   {\normalsize {\bf EOC}} & 
   {\normalsize 2.994} & 
   {\normalsize 2.996} & 
   {\normalsize 2.997} \\
 \hline
  \end{tabular}
  \end{center}
  \caption{\label{table:alfen_mapped}
Convergence study of the 2.5D smooth Alfv\'en wave problem
on a mapped grid. The tables show the $L_1$-error at time $t=1$ 
in the different physical quantities computed using the constrained transport algorithm. The
experimental order of convergence (EOC) is computed by comparing the
error for the two finest grids. All simulations are performed using a CFL number of $0.5$.}
\end{table}

\subsubsection{3D problem}
Next we have performed a convergence study for the
3D smooth Alfv\'en wave problem on a Cartesian grid. The
results, which again confirm the third order convergence rate, are
presented in Table \ref{table:3dconv_study}.
The initial values and the computational domain for this problem are 
described in \cite{article:HeRoTa10}.
\begin{table}
\begin{center}
  \begin{tabular}{|c||c|c|c|c|c||}
    \hline
    &
    {\normalsize $\rho$}&
    {\normalsize $\rho u_1$}&
    {\normalsize $\rho u_2$}&
    {\normalsize $\rho u_3$}&
    {\normalsize ${\mathcal E}$} \\    
    \hline \hline 
    {\normalsize $16 \times 32 \times 32$}  &
    {\normalsize $2.055 \times 10^{-3}$}  &  
    {\normalsize $9.992 \times 10^{-4}$}  &
    {\normalsize $1.716 \times 10^{-3}$}  &
    {\normalsize $1.911 \times 10^{-3}$}  &      
    {\normalsize $5.133 \times 10^{-4}$}   \\
    \hline
    {\normalsize $32 \times 64 \times 64$}  &  
    {\normalsize $2.684 \times 10^{-4}$}  &  
    {\normalsize $1.343 \times 10^{-4}$}  &
    {\normalsize $2.213 \times 10^{-4}$}  &
    {\normalsize $2.532 \times 10^{-4}$}  &      
    {\normalsize $6.611 \times 10^{-5}$}   \\
    \hline
    {\normalsize $64 \times 128 \times 128$}  &  
    {\normalsize $3.419 \times 10^{-5}$}  &  
    {\normalsize $1.679 \times 10^{-5}$}  &
    {\normalsize $2.764 \times 10^{-5}$}  &
    {\normalsize $3.110 \times 10^{-5}$}  &      
    {\normalsize $8.701 \times 10^{-6}$}   \\
    \hline \hline 
    {\normalsize {\bf EOC}} & 
    {\normalsize } 2.973& 
    {\normalsize } 3.000& 
    {\normalsize } 3.001&
    {\normalsize } 3.025&
    {\normalsize } 2.926\\
 \hline
  \end{tabular}

  \bigskip
  
   \begin{tabular}{|c||c|c|c|}
    \hline
     &
    {\normalsize {\bf $L_1$ Error in }$B^1$} & 
    {\normalsize {\bf $L_1$  Error in }$B^2$} & 
    {\normalsize {\bf $L_1$ Error in }$B^3$} \\
    \hline \hline
    {\normalsize $16 \times 32 \times 32$}  &  
    {\normalsize $1.231 \times 10^{-3}$} & 
    {\normalsize $1.805 \times 10^{-3}$}  &  
    {\normalsize $1.778 \times 10^{-3}$} \\
    \hline
    {\normalsize $32 \times 64 \times 64$}  &  
    {\normalsize $1.581 \times 10^{-4}$} & 
    {\normalsize $2.304 \times 10^{-4}$}  &  
    {\normalsize $2.296 \times 10^{-4}$} \\
\hline
    {\normalsize $64 \times 128 \times 128$}  &
    {\normalsize $1.957 \times 10^{-5}$} & 
    {\normalsize $2.869\times 10^{-5}$}  &  
    {\normalsize $2.818 \times 10^{-5}$} \\
    \hline \hline 
   {\normalsize {\bf EOC}} & 
   {\normalsize } 3.014& 
   {\normalsize } 3.006& 
   {\normalsize } 3.026\\
 \hline
  \end{tabular}
 
  \bigskip
  
  \begin{tabular}{|c||c|c|c|}
    \hline
     &
    {\normalsize {\bf $L_1$ Error in }$\A^1$} & 
    {\normalsize {\bf $L_1$  Error in }$\A^2$} & 
    {\normalsize {\bf $L_1$ Error in }$\A^3$} \\
    \hline \hline
    {\normalsize $16 \times 32 \times 32$}  &  
    {\normalsize $1.588 \times 10^{-4}$} & 
    {\normalsize $2.933 \times 10^{-4} $}  &  
    {\normalsize $2.869 \times 10^{-4}$} \\
    \hline
    {\normalsize $32 \times 64 \times 64$}  &  
    {\normalsize $2.340 \times 10^{-5}$} & 
    {\normalsize $3.642 \times 10^{-5} $}  &  
    {\normalsize $3.671 \times 10^{-5}$} \\
\hline
    {\normalsize $64 \times 128 \times 128$}  &
    {\normalsize $2.550 \times 10^{-6}$} & 
    {\normalsize $4.780 \times 10^{-6} $}  &  
    {\normalsize $4.617 \times 10^{-6}$} \\
    \hline \hline 
   {\normalsize {\bf EOC}} & 
   {\normalsize } 3.198& 
   {\normalsize } 2.930& 
   {\normalsize } 2.991\\
 \hline
  \end{tabular}
  \caption{\label{table:3dconv_study}
Convergence study of the 3D  smooth Alfv\'en wave problem
on a Cartesian grid. The tables show the $L_1$-error at time $t=1$ 
in the different physical quantities computed using the constrained transport algorithm. The
experimental order of convergence (EOC) is computed by comparing the
error for the two finest grids. All simulations are performed using a CFL number of $0.6$.}
\end{center}
\end{table}

\subsection{2.5D shock tube problem on mapped grids}
Next we consider a 1D shock tube problem on a scaled versions of the
2D mapped grid given by (\ref{eqn:mapping-Colella}). 
This example demonstrates the performance of the new CT method on
shocks not moving along grid lines. 

The computational domain is the square $[-0.7, 0.7]\times [-0.7,0.7]$. 
In order to avoid inaccuracies at the boundary of the domain, we restrict the
mapping to the area $-0.6 \leq x_c, y_c \le 0.6$. 
The rest of the mesh is Cartesian. We set $L=M=1.2$ and use different
values of $\beta$.

The Riemann initial data have the form
\begin{equation}
\begin{split}
&\left(\rho,\, u^1,\, u^2, \, u^3, \, p, \, B^1, \, B^2, \, B^3 \right)(0, \x) \\
&=\left\{ \begin{array}{l c c}
\Bigl(1.08, \, 1.2, \, 0.01, \, 0.5, \, 0.95, \, \frac{2}{\sqrt{4\pi}}, \, \frac{3.6}{\sqrt{4\pi}}, \, 
\frac{2}{\sqrt{4\pi}} \Bigr)
& \text{if} & x < 0,\\
\Bigl( 1,\, 0,\, 0,\, 0,\, 1,\, \frac{2}{\sqrt{4\pi}}, \, \frac{4}{\sqrt{4\pi}}, \, \frac{2}{\sqrt{4 \pi}} \Bigr) & \text{if} &  x \ge 0.
\end{array} \right.
\end{split}
\end{equation}
As the initial condition for the magnetic potential we use
\begin{equation}
\left({\A}^1, \, {\A}^2, \, {\A}^3 \right)(0,\x)
=\left(0, \, x B^3, \, y B^1 - x B^2 \right).
\end{equation}
We utilize zeroth order extrapolation on all boundaries for the MHD variables and linear extrapolation for the magnetic potential.
Shown is a comparison of scatter plots for the magnetic field components along the $x$-axis.

In Figure \ref{fig:shock-tube}, we show results for the magnetic field components using the new CT
method. Here we compare results for differnt grids, namely a Cartesian
grid and two different versions of the mapped grid obtained by setting
the parameter $\beta$ to either $1/50$ or $1/15$. For all simulations
we used grids with $200 \times 200$ mesh cells.
Note that for the mapped grid computations the solution structure,
moving only in $x$-direction, is not aligned with the grid.
It has been observed previously that this leads to unphysical
oscillations, see \cite{article:HeRoTa10,article:MiTz10}.
A numerical convergence study for $B^1$ on mapped grids with different
resolution showed that the error in the $L_1$ norm decreases with a
rate of about 1/3.
By introducing more numerical viscosity (i.e.\ decreasing the value of
$\lambda_{ii}$) these oscillations can be slightly reduced. However, they
could not be avoided. Here we show results for $\lambda_{ii}=10^3$.
The solid lines in these plots are obtained by computing solutions of the one-dimensional Riemann problem on a very fine equidistant mesh.


\begin{figure}[htb]
\begin{center}
\includegraphics[width=0.32\textwidth]{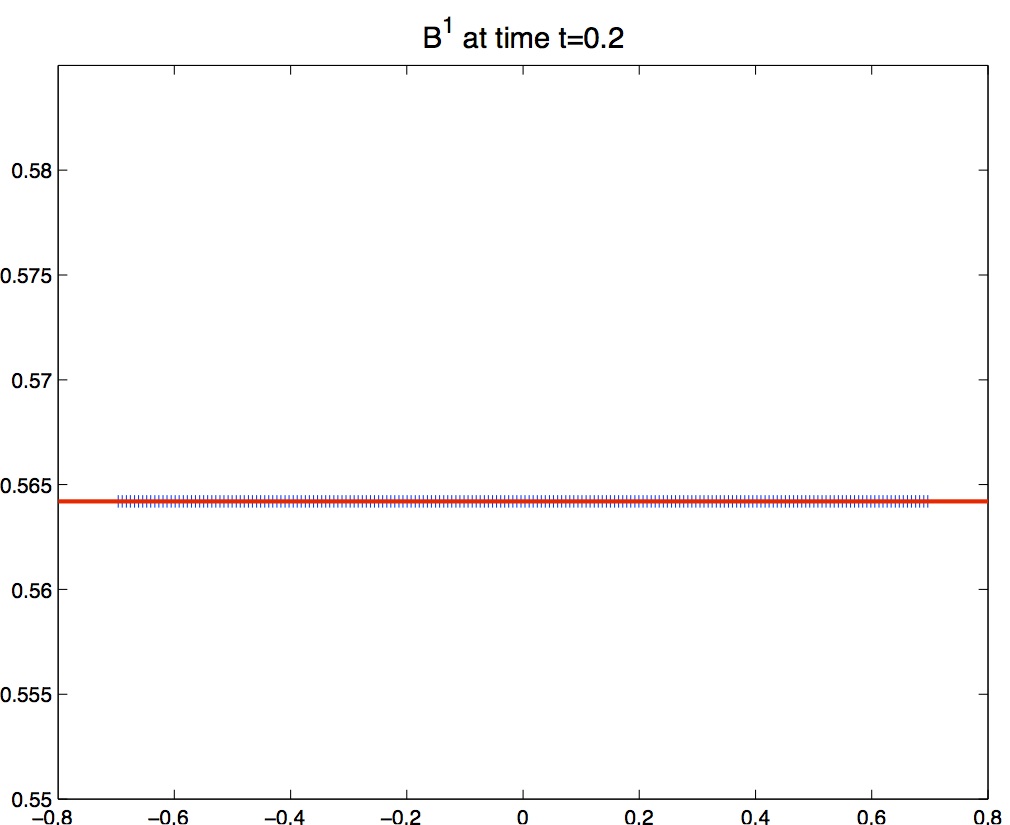}\includegraphics[width=0.32\textwidth]{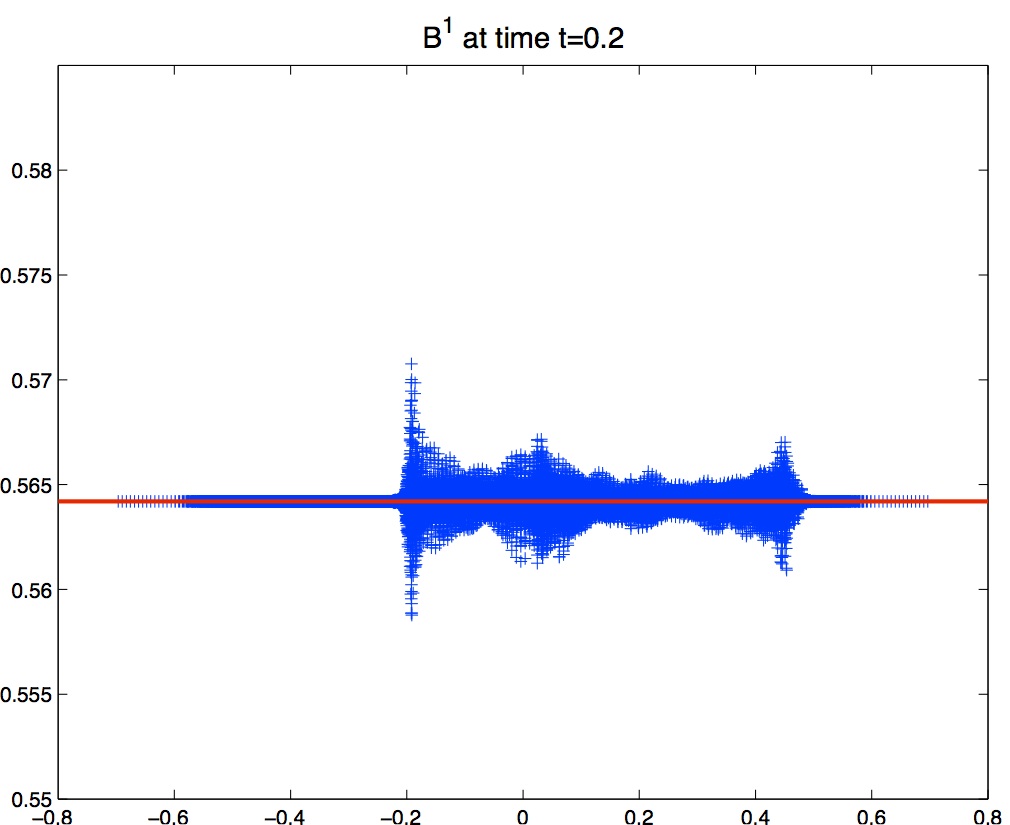}\includegraphics[width=0.32\textwidth]{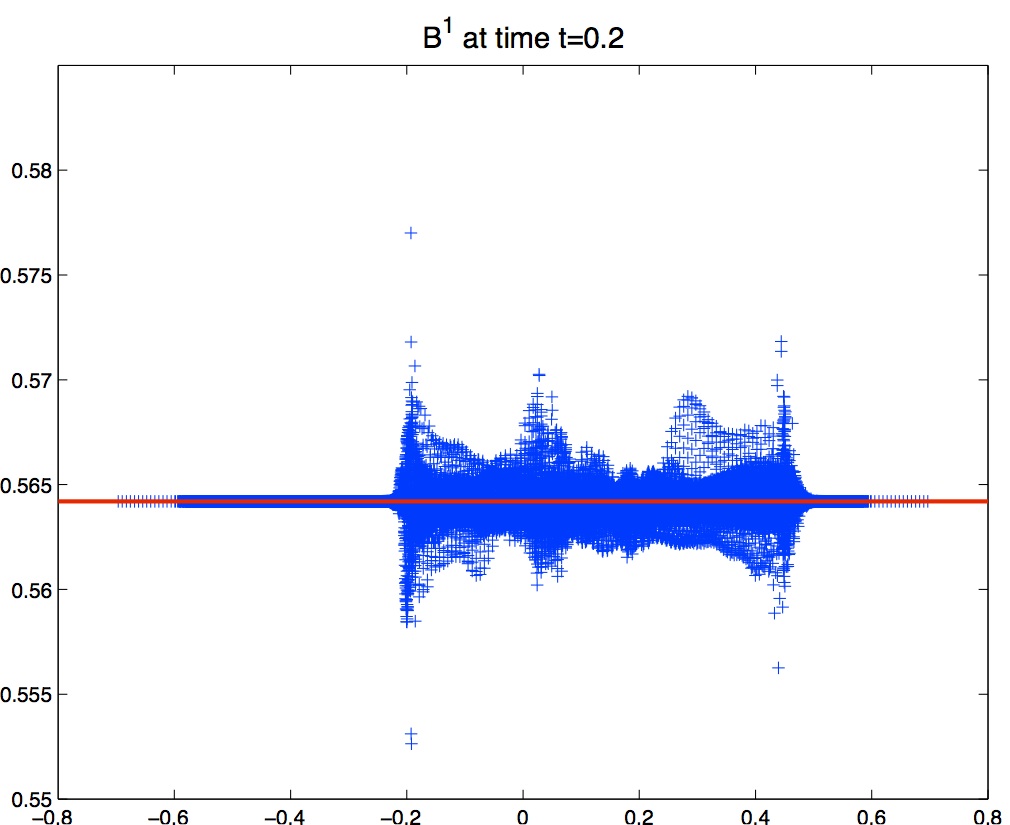}

\includegraphics[width=0.32\textwidth]{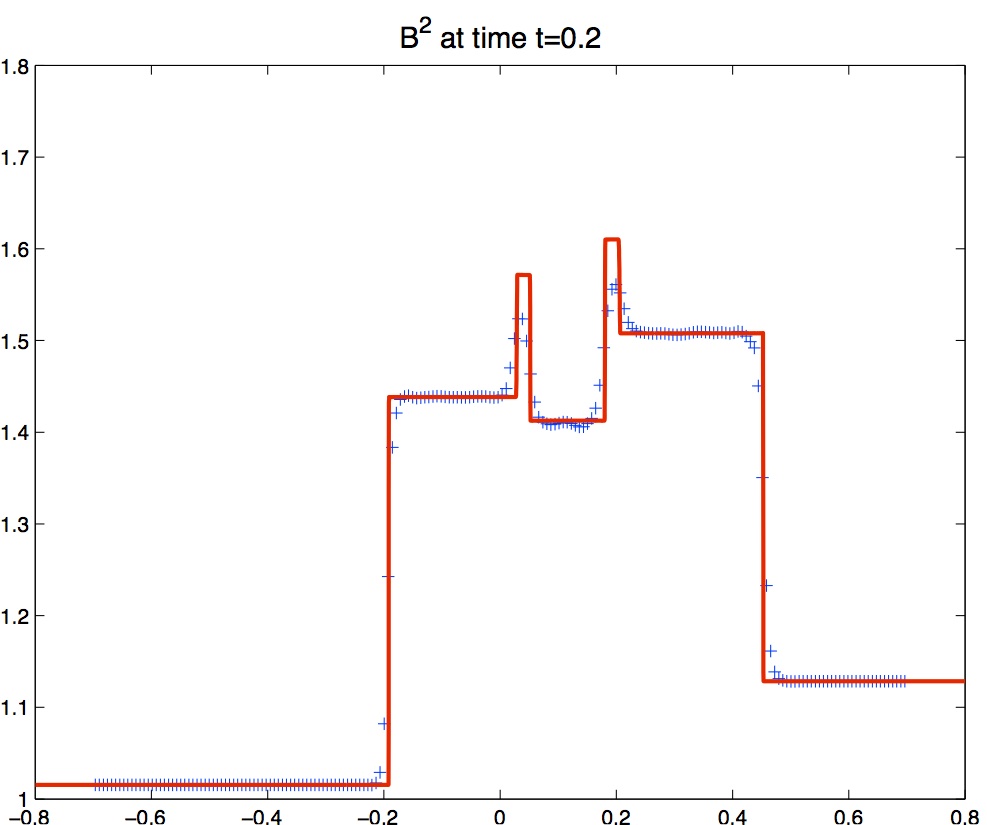}\includegraphics[width=0.32\textwidth]{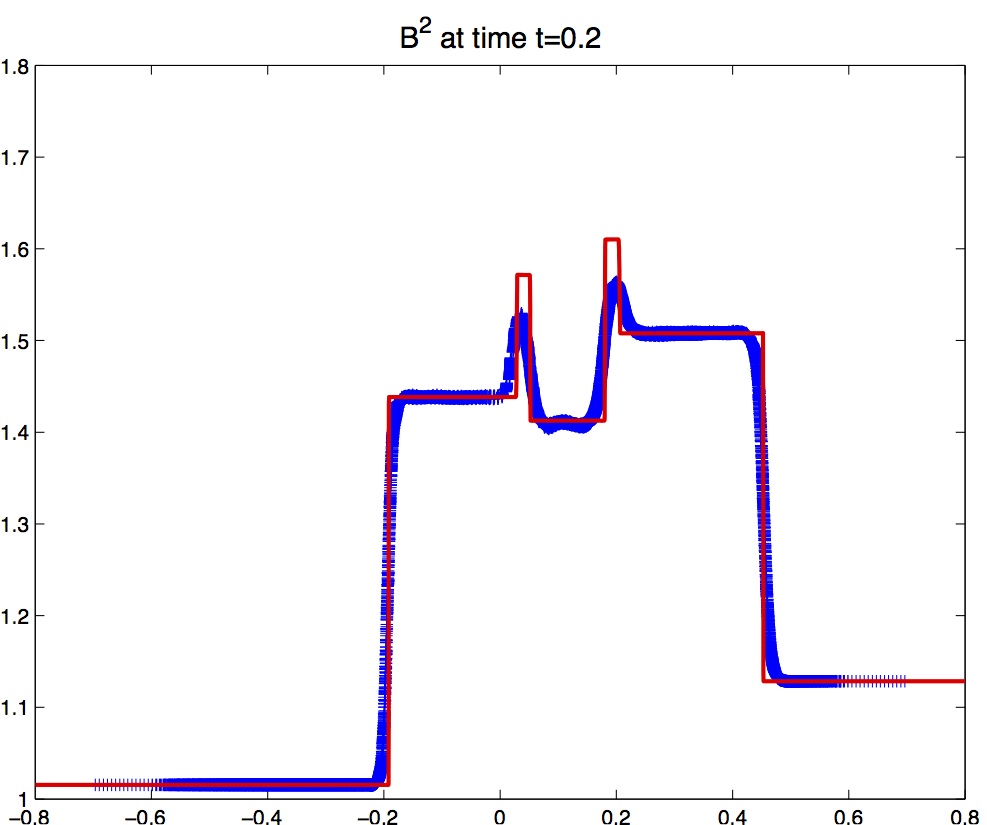}\includegraphics[width=0.32\textwidth]{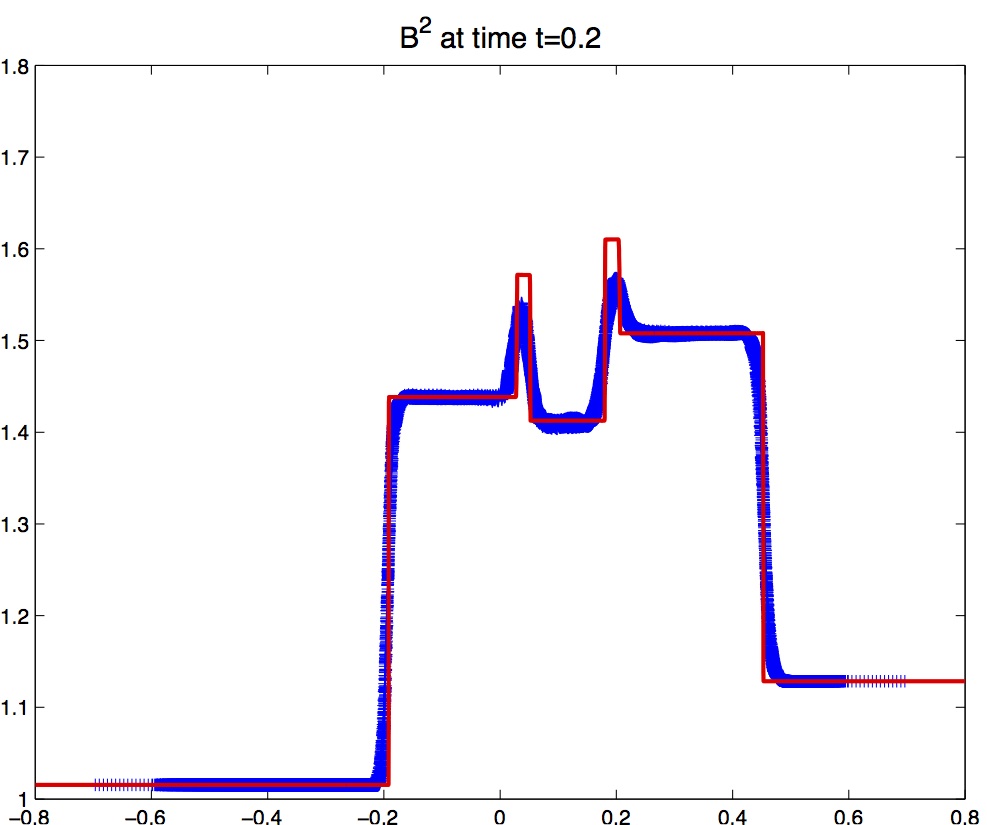}

\includegraphics[width=0.32\textwidth]{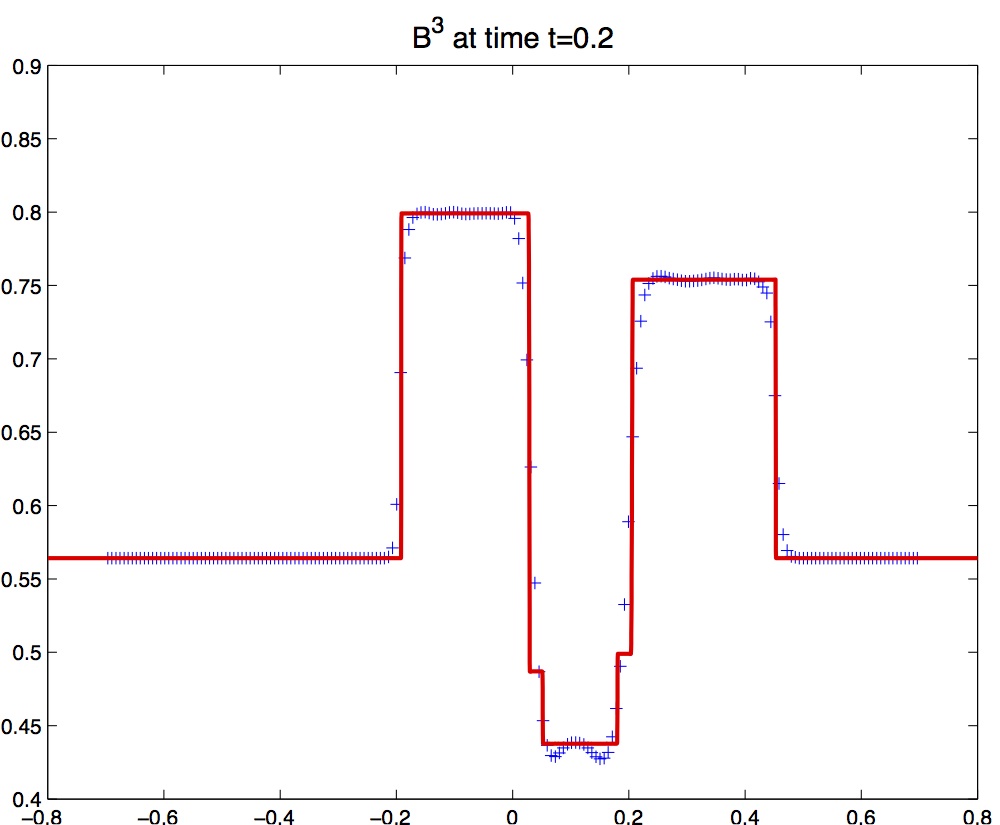}\includegraphics[width=0.32\textwidth]{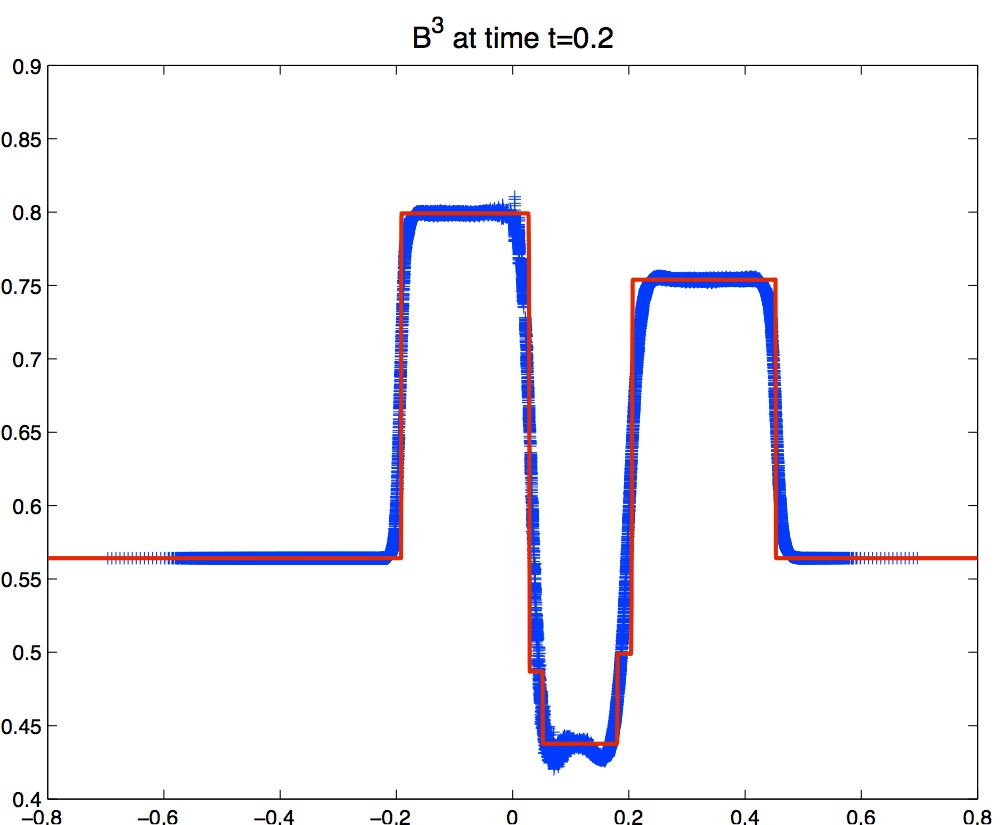}\includegraphics[width=0.32\textwidth]{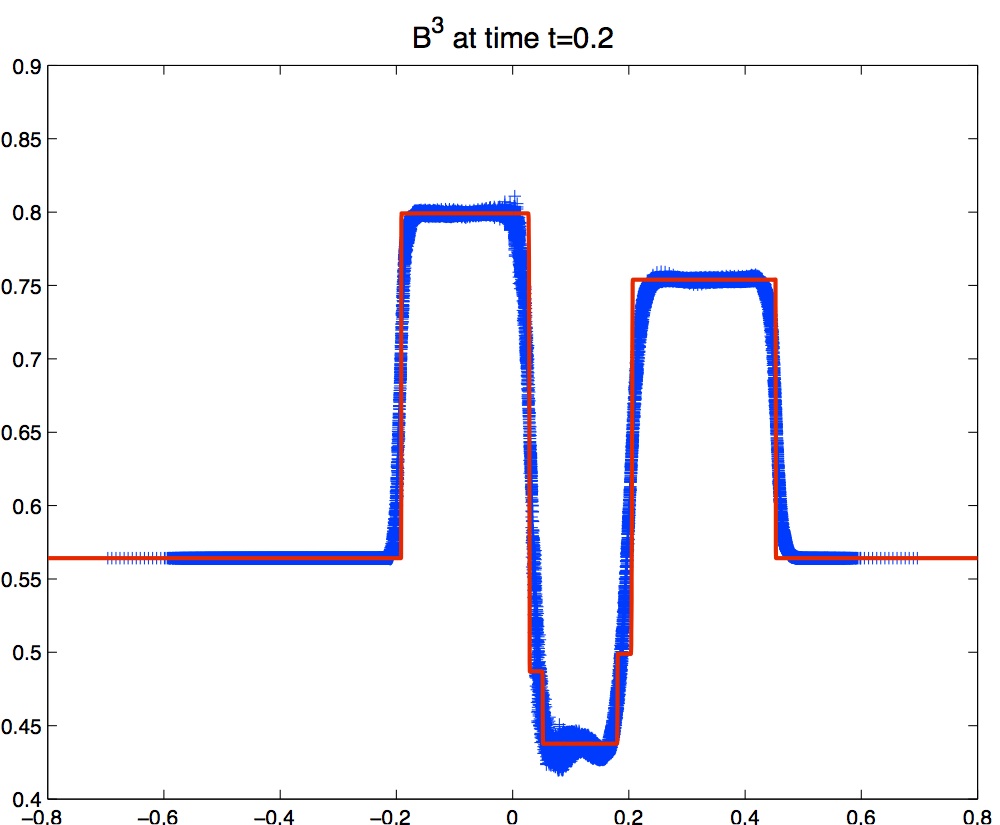}
\caption{\label{fig:shock-tube}Shown are the magnetic field components for the shock tube
  problem at $t=0.2$. The left column shows the results for a
  Cartesian grid computation; the middle column shows results on a
  mapped grid which is a small perturbation of the Cartesian grid
  (using $\beta = 1/50$); the
  right column shows results on a mapped grid which is a larger
  perturbation of the Cartesian grid (using $\beta = 1/15$).}   
  \end{center}
\end{figure}

\subsection{Cloud-shock interaction problem}
Next we present numerical results of the proposed method for a
cloud-shock interaction problem.  Again we consider 3D and 2.5D
variants of this problem.

\subsubsection{3D problem}
The initial conditions consist of a
shock that is located at $x=0.05$, with
\begin{equation}
\begin{split}
&\left(\rho, \, u^1, \, u^2, \, u^3,  \, p,  \, B^1, \, B^2, \, B^3 \right)(0, \x) \\
&=\biggl\{ \begin{array}{l c c}
\left(3.86859,  \, 11.2536,  \, 0,  \, 0,  \, 167.345,  \, 0,  \, 2.1826182, 
 \, -2.1826182\right)
& \text{if} &x < 0.05,\\
\left( 1,  \, 0,  \, 0,  \, 0,  \, 1,  \, 0,  \, 0.56418958,  \, 0.56418958\right) & \text{if} & x\ge 0.05,
\end{array}
\end{split}
\end{equation}
and a spherical cloud of density $\rho = 10$ with radius $r=0.15$ 
centered at $(0.25,0.5,0.5)$. The cloud is in hydrostatic
equilibrium with the fluid to the right of the shock.
The initial conditions for the magnetic potential are given by
\begin{equation}
\Av (0, \x) = \left\{ \begin{array}{l c c}
\left( 2.1826182  \,  y,  \, 0,  \,  -2.1826182 \,  (x-0.05) \right)^T & \text{if} & x < 0.05,
\\
\left( -0.56418958 \,  y,  \, 0,  \, 0.56418956 \, (x-0.05) \right)^T & \text{if} & x \ge 0.05.
\end{array}\right.
\end{equation} 
The computational domain is the unit cube. Inflow boundary conditions
are used at the left side and outflow boundary conditions are used at
all other sides.
Without the constrained transport step the method would fail to
compute the solution structure. In Figure \ref{fig:cloud-shock3d} we
present results of a computation with the three-dimensional method
proposed in this paper. Compare also with \cite{article:HeRoTa10},
where this problem was computed with our previous approach. 

\begin{figure}[htb!]
\begin{center}
\centerline{\includegraphics[scale=0.85]{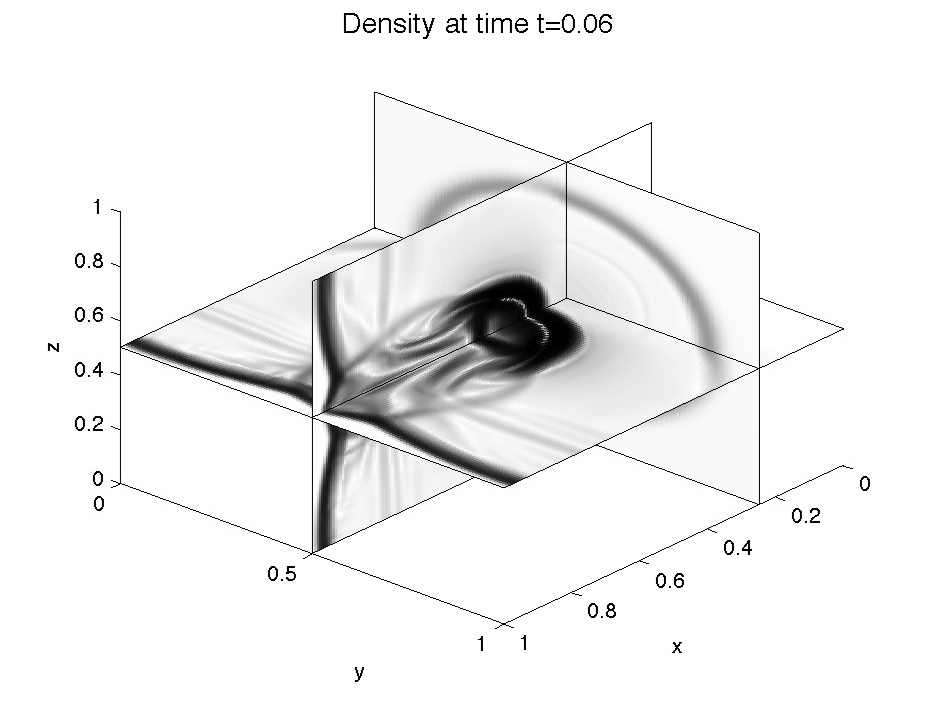}}
\caption{This figure shows the density at time
$t=0.06$ on a three-dimensional Cartesian grid using
$150\times150\times150$ mesh cells. For this simulation we used a
$2^{\text{nd}}$ order version of the proposed algorithm.\label{fig:cloud-shock3d}}
\end{center}
\end{figure}

\subsubsection{2.5D problem}
We also studied the cloud-shock interaction problem in the 2.5D
 case. 
The initial values are taken from the
3D test problem by setting $z=0.5$. 
In the 2.5D case we show results for $B^3$ as computed
by the new constrained transport algorithm that updates all three
components of the magnetic field. The results compare well with those
obtained by our previous approach \cite{article:HeRoTa10} and also with the results from the
2D unsplit method of \cite{article:Ro04b} in which only $B^1$
and $B^2$ are updated by a constrained transport method.
We compute the solution both on a Cartesian grid as well as on a
mapped grid. For the mapped grid computation we use a grid of the form
shown in Panel (b) of Figure \ref{fig:mapped1}. This is a small modification of a
grid with circular inclusion discussed in \cite{article:CHL08}.
The initial cloud position is inside
the circular region of the mapped grid. The results on both grids
compare well, although the solution structure is slightly sharper
resolved on the Cartesian mesh (see Figure \ref{fig:2dcloudshock}).

\begin{figure}[htb!]
\begin{center}
(a)\includegraphics[scale=0.4]{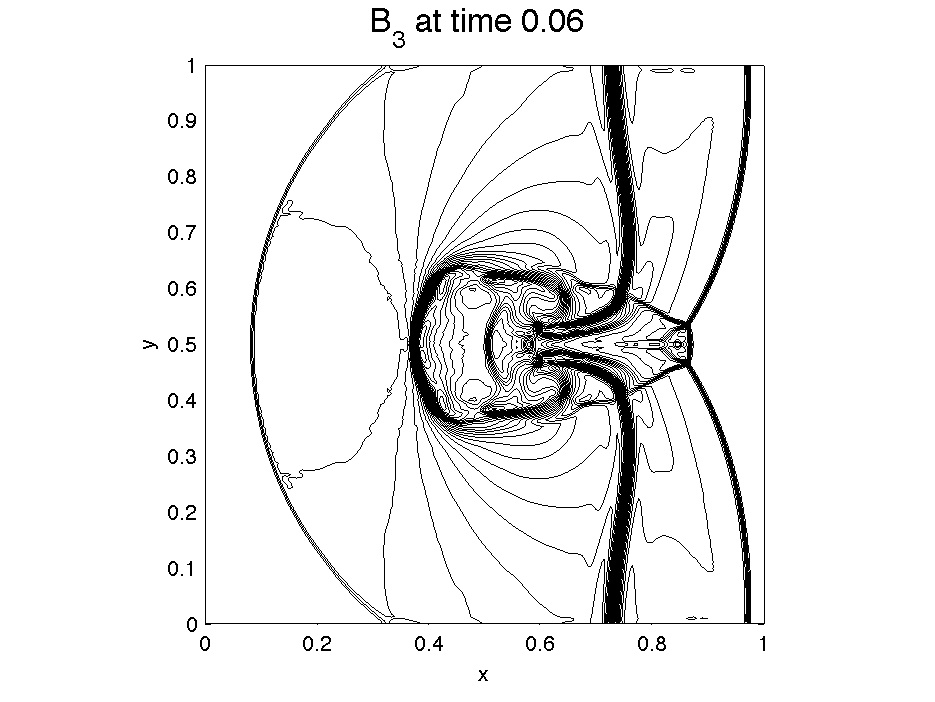}\hfill
(b)\includegraphics[scale=0.4]{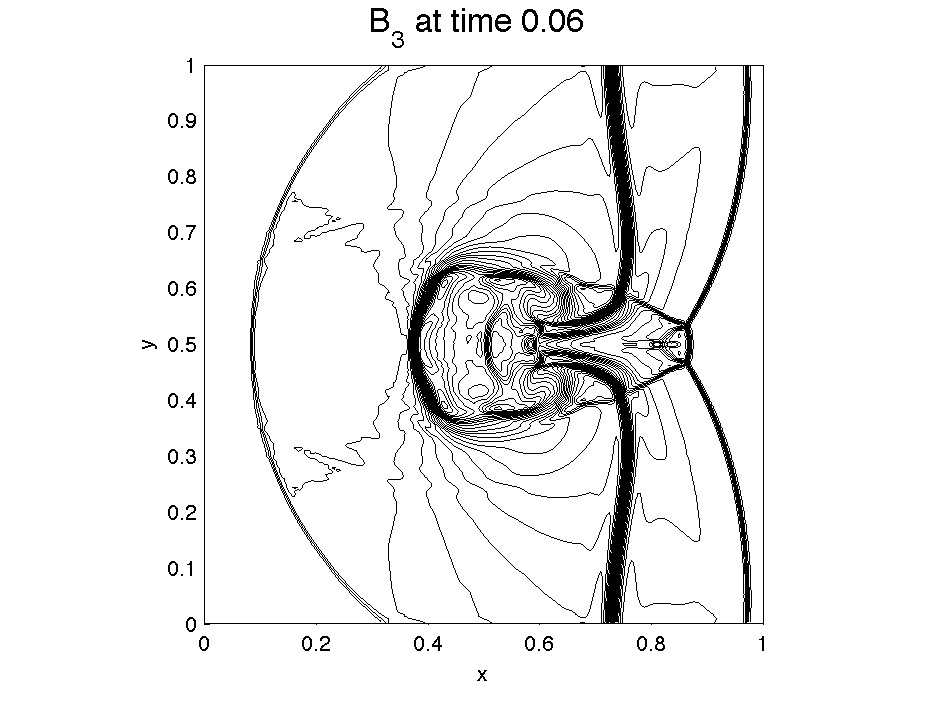}
\caption{\label{fig:2dcloudshock}The $3^{\text{rd}}$ component of the magnetic field at time
$t=0.06$  (a) on a $256 \times 256$ Cartesian grid and (b) on the
mapped grid shown in Figure \ref{fig:mapped1} (right) using the $3^{\text{rd}}$ order version of the algorithm. 
Here the 2.5 dimensional problem was implemented in such a way that 
all three components of the magnetic field were updated in the constrained transport step.}
\end{center}
\end{figure}

\section{Conclusions}
\label{sec:conclusions}
In this work we developed a finite volume
method for solving the 2D and 3D ideal MHD equations on both
Cartesian and logically Cartesian mapped grids. The scheme is
a method of lines that is
based on a  finite volume discretization
in space coupled with a strong-stability-preserving Runge-Kutta
time stepping method. 
By using this method of lines discretization, we were able to construct a
third-order accurate algorithm for the MHD equations. 
In order to control errors in the divergence of the magnetic
field we developed
a novel constrained transport approach that  couples
to the finite volume discretization. 
Our constrained transport methodology allowed us to overcome two important difficulties:
(1) the evolution equation for the magnetic potential is only
weakly hyperbolic, and (2) standard limiters applied to the
magnetic potential do not adequately control unphysical
oscillations in the magnetic field.

The method described in this work is based on the following procedure
that is carried out during each Runge-Kutta stage:
\begin{enumerate}
\item Update the MHD variables without regard for controlling
	discrete $\nabla \cdot \B$ errors. The updated magnetic field
	in this step is the {\it predicted} magnetic field. This step
	is carried out with the finite volume schemes described
	in Section \ref{section:mhd_space}.
\item Update the magnetic vector potential by solving a weakly
hyperbolic vector transport equation. This transport equation
is simply the induction equation with ideal Ohm's law, but written
in potential form using the Weyl gauge. This step
	is carried out with the non-conservative finite volume schemes described
	in Section \ref{section:potential_space}. Special artificial
	resistivity limiters described in Section \ref{sec:special_limiters} are used in this step
	to simultaneously control unphysical oscillations in the
	magnetic potential and the magnetic field.
\item Define the {\it corrected} magnetic field as the result of
taking the discrete curl of the updated magnetic vector potential.
The precise form of the discrete curl operation is described in Section \ref{sec:curlA}.
\end{enumerate}
The resulting scheme was applied to several 2.5D and 3D test cases
on both Cartesian and mapped grids. These test cases
 demonstrated two important features: (1) we are able to obtain full 
third-order accuracy on smooth problems and (2) we are 
able to accurately capture shock waves. 

Finally we note that although there are some free parameters required in the definition of the
artificial resistivity added in the magnetic vector potential equation update,
we have found parameter values that seem to robustly work for a large 
class of problems. In all the simulations presented in this work we have
stuck to these same parameter values. Experimentation with the $\lambda_{ii}$ 
parameter in \eqref{eqn:artvisc_params} shows that one can slightly increase or reduce
the amount of artificial resistivity, but that for a broad range of $\lambda_{ii}$ the results
are qualitatively the same. 

\bigskip

\noindent
{\bf Acknowledgements.}
This work was supported in part by the DFG
through FOR1048 and the NSF grants DMS--0711885 and 
DMS--1016202.

\bibliographystyle{plain}

\end{document}